\theoremstyle{plain}
\newtheorem{prop}{Proposition}[section]
\newtheorem{thm}[prop]{Theorem}
\newtheorem{cor}[prop]{Corollary}
\newtheorem{lem}[prop]{Lemma}
\theoremstyle{definition}
\newtheorem{dfn}[prop]{Definition}
\newtheorem{rem}[prop]{Remark}
\newtheorem{rems}[prop]{Remarks}
\newtheorem{example}[prop]{Example}
\newtheorem{lab}[prop]{}
\numberwithin{equation}{section}
\newcommand{\tot}{\leftrightarrow}
\newcommand{\into}{\hookrightarrow}
\newcommand{\onto}{\twoheadrightarrow}
\newcommand{\A}{{\mathbb{A}}}
\newcommand{\C}{{\mathbb{C}}}
\newcommand{\N}{{\mathbb{N}}}
\renewcommand{\P}{{\mathbb{P}}}
\newcommand{\R}{{\mathbb{R}}}
\newcommand{\Z}{{\mathbb{Z}}}
\newcommand{\scrI}{{\mathscr{I}}}
\DeclareMathOperator{\conv}{conv}
\renewcommand{\div}{{\rm div}}
\DeclareMathOperator{\Hom}{Hom}
\DeclareMathOperator{\Spec}{Spec}
\newcommand{\Th}{\mathrm{TH}}
\newcommand{\x}{{\mathtt{x}}}
\newcommand{\du}{{\scriptscriptstyle\vee}}
\renewcommand{\emptyset}{\varnothing}
\renewcommand{\setminus}{\smallsetminus}
\newcommand{\ol}{\overline}
\newcommand{\plus}{{\scriptscriptstyle+}}
\newcommand{\all}{\forall\,}
\newcommand{\ex}{\exists\,}
\newcommand{\sa}{semi-algebraic}
\newcommand\http[1]{\href{http://#1}{\nolinkurl{#1}}}
\begin{document}

\title
  [Convex hulls of curves of genus one]
  {Convex hulls of curves of genus one}

\author
  {Claus Scheiderer}
\address
  {Fachbereich Mathematik and Statistik \\
  Universit\"at Konstanz \\
  78457 Konstanz \\
  Germany}
\email
  {claus.scheiderer@uni.constanz.de}
\urladdr
  {http://www.math.uni-konstanz/\textasciitilde scheider}

\begin{abstract}
Let $C$ be a real nonsingular affine curve of genus one, embedded in
affine $n$-space, whose set of real points is compact. For any
polynomial $f$ which is nonnegative on $C(\R)$, we prove that there
exist polynomials $f_i$ with $f\equiv\sum_if_i^2$ (mod~$\scrI_C$) and
such that the degrees $\deg(f_i)$ are bounded in terms of $\deg(f)$
only. Using Lasserre's relaxation method, we deduce an explicit
representation of the convex hull of $C(\R)$ in $\R^n$ by a lifted
linear matrix inequality. This is the first instance in the
literature where such a representation is given for the convex hull
of a nonrational variety. The same works for convex hulls of
(singular) curves whose normalization is $C$. We then make a detailed
study of the associated degree bounds. These bounds are directly
related to size and dimension of the projected matrix pencils.
In particular, we prove that these bounds tend to infinity when the
curve $C$ degenerates suitably into a singular curve, and we provide
explicit lower bounds as well.
\end{abstract}

\thanks
  {}

\keywords
  {}

\subjclass[2000]
  {}

\maketitle


\section*{Introduction}

Let $V\subset\A^n$ be an affine algebraic variety over $\R$ whose set
$V(\R)$ of real points is compact.
The convex hull of $V(\R)$ in $\R^n$ is a compact \sa\ set. Recently
there has been a growing interest in describing this set, or its
boundary, from different perspectives, see \cite{paba}, \cite{he},
\cite{gpt}, \cite{sss}, \cite{rs}. Part of the motivation comes from
potential applications in semidefinite programming. If $A_i$ ($i=0,
\dots,n$) are symmetric real matrices of some fixed size, an
inequality
$$A_0+x_1A_1+\cdots+x_nA_n\>\succeq\>0$$
is called a linear matrix inequality (LMI) in the variables $x_1,
\dots,x_n$. (Here $\succeq$ denotes positive semidefiniteness of the
matrix.) The set $K$ of $x\in\R^n$ which satisfy the LMI is a basic
closed and convex \sa\ subset of $\R^n$. From the view point of
convex optimization, such a description is very useful since it
allows quick and efficient optimization of linear functions on $K$,
see e.g.\ \cite{nn}, \cite{bv}, \cite{ne}.

Convex sets which allow an LMI representation are also called
spectrahedra. Being a spectrahedron is a restrictive property for
basic closed convex sets, since these sets are rigidly convex, a
property which is much stronger than just convexity \cite{hv}. In
dimension $\le2$, rigid convexity characterizes spectrahedra
(\cite{hv} Thm.\ 2.2). In higher dimensions it is currently unknown
whether such a converse holds.

For optimization purposes, however, a linear projection of a
spectrahedron works just as well as a spectrahedron itself. That
$K\subset\R^n$ is a projected spectrahedron means that there exist
symmetric real matrices $A_i$ ($0\le i\le n$) and $B_j$ ($1\le j\le
m$) such that $K$ is the set of $x\in\R^n$ for which there exists
$y\in\R^m$ with
$$A_0+\sum_{i=1}^nx_iA_i+\sum_{j=1}^my_jB_j\>\succeq\>0.$$
One speaks of a lifted LMI representation of $K$, or of a
semidefinite (SDP) representation. Projected spectrahedra form a much
wider class than spectrahedra, and much research effort is currently
spent on understanding their properties, e.g.\ \cite{nn}, \cite{ne},
\cite{la}, \cite{hn2}, \cite{hn1}, \cite{nps}, \cite{nt}, \cite{gn}.
In fact, Helton and Nie \cite{hn2} have conjectured that every convex
\sa\ set allows a lifted LMI representation.

Obtaining explicit lifted LMI representations for concretely given
convex sets is a different matter. A general construction, called the
relaxation method, is due to Lasserre \cite{la} and applies in many
cases. We will recall it (in specialized form) in Section~1 below.
Other constructions are due to Helton and Nie (\cite{hn2},
\cite{hn1}), who proved the existence of lifted LMI representations
for several large classes of convex sets.

Here we are interested in applying Lasserre's construction to the
convex hull of a (compact) real algebraic curve $C$ in the affine
plane or some higher-dimensional space. The key properties that are
needed to make the relaxation method work are a partial stability
property and a partial saturation property, each for the cone of sums
of squares in the coordinate ring $\R[C]$ (see Sect.~1). Namely,
every linear polynomial that is nonnegative on the curve has to be a
sum of squares in $\R[C]$ with uniformly bounded degrees.

Our results apply when the curve $C$ is nonsingular of genus one and
its real part $C(\R)$ is compact. It has been known for some time
already that every psd element in $\R[C]$ is a sum of squares. We
prove that the sums of squares cone in $\R[C]$ is stable, which is
our main result (Theorems \ref{sosstable}, \ref{thmsum}). The proof
uses algebraic-geometric methods, and unfortunately it seems to be
restricted to genus one. No similar result is known for any curve of
genus $>1$ (with compact real points). On the other hand, our result
gives the first construction of a lifted LMI representation for the
convex hull of a nonrational real algebraic variety. We illustrate
the application to such representations by means of some concrete
examples (Sect.~3).

Since the explicit nature of the stability (degree) bounds is
responsible for the sizes of the lifted LMI representations, there
exist good reasons to study these bounds in more detail. This is
mainly done in Sect.~4. We succeed in making the bounds fairly
explicit, and in a sense we arrive at the best possible bounds. As a
result, we can make the lifted LMI representations completely
explicit for many curves. We also study how the bounds change under
variation of the curve, and we prove that they tend to infinity when
the curve gets degenerated to a singular (rational) curve.


\section{Convex hulls of algebraic sets and Lasserre relaxation}

We give a brief review here of Lasserre's relaxation method for the
construction of lifted LMI representations, however only in the
special case which will be used later, to keep the exposition less
technical.

\begin{lab}
For the following discussion, $A$ can be any finitely generated
$\R$-algebra. Let $V=\Spec(A)$ be the associated affine $\R$-variety.
The set $V(\R)=\Hom_\R(A,\R)$ of $\R$-algebra homomorphisms has a
natural euclidean topology, namely the topology induced by the
inclusion $V(\R)\into\R^n$, $p\mapsto(x_1(p),\dots,x_n(p))$, where
$x_1,\dots,x_n$ is any system of generators of $A$. This embedding
identifies $V(\R)$ with a (closed) real algebraic subset of $\R^n$.
As usual, we think of the elements $p\in V(\R)$ as points and denote
the pairing between $f\in A$ and $p\in V(\R)$ by $f(p)$.
\end{lab}

\begin{lab}\label{condsex}
Let $\Sigma A^2$ denote the cone of sums of squares in $A$. By
$$A_\plus\>=\>\{f\in A\colon\all p\in V(\R)\ f(p)\ge0\}$$
we denote the cone of all \emph{positive semidefinite (psd)} elements
of $A$. Given any finite-dimensional linear subspace $L$ of $A$, one
can ask two questions:
\begin{itemize}
\item[(1)]
Is $L\cap A_\plus$ contained in $\Sigma A^2$ (and hence equal to
$L\cap\Sigma A^2$)?
\item[(2)]
Does there exist a finite-dimensional linear subspace $W$ of $A$ such
that every $f\in L\cap\Sigma A^2$ can be written $f=\sum_{i=1}^r
a_i^2$ with $r\in\N$ and $a_1,\dots,a_r\in W$?
\end{itemize}
Recall that the preordering $\Sigma A^2$ is called \emph{saturated}
if $A_\plus=\Sigma A^2$ (\cite{sch:tams}, \cite{sch:guide}).
Therefore, a positive answer to (1) can be regarded as a partial
saturatedness property of $\Sigma A^2$. On the other hand, $\Sigma
A^2$ is called \emph{stable} if (2) has a positive answer for any
finite-dimensional $L$ (\cite{ps}, \cite{sch:stable}). Therefore, a
positive answer to (2) means a partial stability property of $\Sigma
A^2$.
\end{lab}

\begin{rem}\label{citgpt1}
Assume we are fixing a system of generators of $A$, so that $A=\R[\x]
/I$ for some ideal $I$ of $\R[\x]$, where $\x=(x_1,\dots,x_n)$ is a
tuple of variables. For $d\ge0$ let $\R[\x]_d$ be the space of
polynomials of total degree $\le d$ in $\R[\x]$, and put $A_d=(\R[\x]
_d+I)/I$. Given integers $d$, $k\ge0$, the ideal $I$ is said to be
\emph{$(d,k)$-sos} in \cite{gpt} if (1) and (2) hold for $L=A_d$ and
$W=A_k$. The problem of characterizing the $(1,k)$-sos ideals in
$\R[\x]$, and in particular the $(1,1)$-sos ideals, was raised by
Lov\'asz \cite{lo}, who showed that this question for certain
$0$-dimensional ideals is closely related to the stable set problem
for graphs.
\end{rem}

\begin{lab}\label{lasrelax}
We now recall Lasserre's important relaxation construction \cite{la}.
Assume $A=\R[\x]/I$ for some ideal $I$ of $\R[\x]$, where $\x=(x_1,
\dots,x_n)$. We denote the zero set of $I$ in $\R^n$ by $V_\R(I)$.
For convenience of exposition let us assume that $I$ does not contain
any nonzero polynomial of degree~$\le1$.

Let $L=A_1=\{f+I\colon f\in\R[\x]$, $\deg(f)\le1\}\subset A$, and let
$W$ be some finite-dimensional linear subspace of $A$ containing $L$.
Let $U$ be the linear subspace of $A$ generated by all squares $a^2$
with $a\in W$; clearly $L\subset U$ and $\dim(U)<\infty$. Let $\rho
\colon U^\du\to L^\du$ be the restriction map between the dual linear
spaces induced by the inclusion $L\subset U$. Moreover, let $U_1^\du$
(resp.\ $L_1^\du$) denote the set of all linear forms $\lambda$ in
$U^\du$ (resp.\ in $L^\du$) with $\lambda (1)=1$. Then $L_1^\du$ is
canonically identified with $\R^n$ via $\lambda\tot(\lambda(\ol x_1),
\dots,\lambda(\ol x_n))$, and we always consider $V_\R(I)$ as a real
algebraic subset of $L_1^\du=\R^n$ in the natural way.

Let $M_W=\bigl\{\sum_{i=1}^ra_i^2\colon r\in\N$, $a_1,\dots,a_r\in
W\}\subset A$ denote the set of sums of squares of elements of $W$.
This is a convex cone in $U$, which is closed in $U$ if $I$ is a real
radical ideal (\cite{ps} Prop.\ 2.6). We'll denote the dual of a
convex cone $C$ by $C^*$, so $M_W^*$ is the dual cone of $M_W$ in
$U^\du$. Then $M_W^*$ is a spectrahedron in $U^\du$, which means that
$M_W^*$ can be defined in $U^\du$ by a (homogeneous) linear matrix
inequality.
Indeed, for $\mu\in U^\du$ the symmetric bilinear form
$$\beta(\mu)\colon\ W\times W\to\R,\quad(a,a')\mapsto\mu(aa')$$
depends linearly on $\mu$, and by definition it is psd if and only if
$\mu\in M_W^*$.

The subset $M_W^*\cap U_1^\du$ of $M_W^*$ is an affine-linear section
of $M_W^*$, and is therefore a spectrahedron as well. Its image
$$K_W\>:=\>\rho(M_W^*\cap U_1^\du)\>=\>L_1^\du\cap\rho(M_W^*)$$
under the restriction map $\rho\colon U_1^\du\to L_1^\du=\R^n$ is a
convex \sa\ subset of $L_1^\du=\R^n$ which contains $V_\R(I)$. By
construction, $K_W$ is a linear projection of a spectrahedron.
Increasing $W$ results in decreasing $K_W$, so by making $W$ larger
and larger (of finite dimension) one gets a shrinking family of
convex sets $K_W$ which all contain $V_\R(I)$. For ease of exposition
let us assume that the ideal $I$ is real radical.
Then the closure $\ol{K_W}$ is equal to $L_1^\du\cap(L\cap M_W)^*$.
Moreover, $\ol{K_W}=\ol{\conv V_\R(I)}$ holds if, and only if, $L$
and $W$ satisfy conditions (1) and (2) of \ref{condsex}. (See
\cite{la} Thm.~2 and \cite{nps} Prop.\ 3.1.) If these conditions are
fulfilled, and if the convex hull of $V_\R(I)$ is closed, we have
obtained an explicit representation of $\conv V_\R(I)=K_W$ by a
lifted LMI. Note that $\conv V_\R(I)$ will be automatically closed if
the real algebraic set $V_\R(I)$ is compact.
\end{lab}

\begin{lab}
We keep the assumptions and notations of \ref{lasrelax}. Let us take
$W=A_k=(\R[\x]_d+I)/I$ for some $k\ge1$, and form the associated
projected spectrahedron $K_{A_k}$ as before. By \ref{lasrelax}, the
ideal $I$ is $(1,k)$-sos (see \ref{citgpt1}) if, and only if,
$\ol{\conv V_\R(I)}=\ol{K_{A_k}}$. The \emph{$k$-th theta body} of
the ideal $I$, defined in \cite{gpt} as
$$\Th_k(I)\>=\>\bigl\{x\in\R^n\colon\all f\in L\cap M_{A_k}\ f(x)\ge0
\bigr\},$$
is (by definition) equal to $L_1^\du\cap(L\cap M_{A_k})^*$, and is
therefore equal to $\ol{K_{A_k}}$.
The ideal $I$ is said to be \emph{$\Th_k$-exact} in \cite{gpt} if
$\Th_k(I)$ is the closure of $\conv V_\R(I)$. We see that this is the
case if and only if $I$ is $(1,k)$-sos (assuming $I$ real radical),
see \cite{gpt} Prop.\ 2.8.
\end{lab}

\begin{lab}\label{verorem}
Conditions (1) and (2) of \ref{condsex} are also of interest for
finite-dimensional subspaces $L$ of $A$ other than $L=A_1$. Given an
arbitrary such subspace $L$ (containing~$1$), let $A'$ be the
$\R$-subalgebra generated by $L$, and let $V'=\Spec(A')$. If
$1=u_0,u_1,\dots,u_m$ is a vector space basis of $L$, then $x\mapsto
(u_1(x),\dots,u_m(x))$ is a closed embedding of $V'$ into affine
$m$-space. If there exists a finite-dimensional subspace $W$ of $A$
satisfying (1) and (2), and if $V'(\R)$ is compact, we get a
representation of the convex hull of $V'(\R)$ in $\R^m$ as a
projected spectrahedron.
\end{lab}

\begin{rem}
In the above discussion we only considered sums of squares,
corresponding on the geometric side to convex hulls of real algebraic
sets in $\R^n$. We did so to simplify the exposition, and since the
main results of this paper only concern this case. Note however that
both the setup and the results of Lasserre relaxation generalize well
to arbitrary finitely generated quadratic modules. On the geometric
side, this corresponds to convex hulls of basic closed \sa\ sets. See
also \cite{nps} and \cite{gn} for more details.
\end{rem}


\section{Stability of sums of squares}

From now on we will consider affine algebraic curves (mostly
nonsingular) of genus one. By the genus of a real curve which is
irreducible over $\C$, we mean the (geometric) genus of its
nonsingular projective model. In this section we will prove:

\begin{thm}\label{sosstable}
Let $C$ be an irreducible nonsingular affine curve of genus one over
$\R$ which has at least one pair of conjugate nonreal points at
infinity. Then the preordering of sums of squares in $\R[C]$ is
stable and saturated.
\end{thm}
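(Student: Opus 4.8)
The plan is to pass to the nonsingular projective model $\ol C$ of $C$, a curve of genus one over $\R$, and to work with the finite set $S=\ol C\setminus C$ of points at infinity. By hypothesis $S$ contains at least one conjugate pair of nonreal points, and since $C(\R)$ is compact, $S$ consists entirely of nonreal points, occurring in conjugate pairs. Writing $D=\sum_{q\in S}q$ for the reduced divisor supported on $S$ (defined over $\R$), the coordinate ring is filtered by pole order, $\R[C]=\bigcup_{m\ge0}L(mD)$, and this filtration is comparable, up to a curve-dependent constant, with the degree filtration $A_d$ attached to any fixed system of generators; by Riemann--Roch, $\dim L(mD)=m\deg D$ for $m\ge1$. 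Both assertions will follow once I show: every psd $f\in L(mD)$ is a sum of two squares $f=g^2+h^2$ with $g,h\in L(m'D)$ and $m'$ bounded in terms of $m$ alone (saturatedness being the existence of such a decomposition, stability the boundedness of $m'$). Here I use the norm interpretation: $f=g^2+h^2$ with $g,h\in\R[C]$ is equivalent to $f=u\ol u$ for some $u\in\C[C]=\C\otimes_\R\R[C]$, where $u=g+ih$ and $\ol{\,\cdot\,}$ denotes complex conjugation.

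Since $f$ is psd, at every real point its order of vanishing is even, so $\div(f)=2Z_0+W-P$, where $Z_0\ge0$ is supported on real points, $W\ge0$ is the (conjugation-stable) divisor of nonreal zeros, and $P\ge0$ is the pole divisor, which is conjugation-stable, supported on $S$, and satisfies $P\le mD$. To produce $u$ I must split $\div(f)=E+\ol E$ with $E$ principal and its poles in $S$; then any $u$ with $\div(u)=E$ does the job. Writing $\div(f)=\sum_P n_P\cdot P$ with $n_{\ol P}=n_P$, a split assigns to $E$ the order $a_P$ at $P$ with $a_P+a_{\ol P}=n_P$; the real points force $a_P=n_P/2$, while each conjugate pair leaves $a_P$ free. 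As $\deg E=0$ automatically, $E$ is principal iff its class vanishes in $\mathrm{Pic}^0(\ol C)(\C)\cong\ol C(\C)$, and one computes
$$[E]=\sum_{\{P,\ol P\}}\Bigl(a_P-\tfrac{n_P}2\Bigr)\,[P-\ol P],$$
the sum running over conjugate pairs in the support of $\div(f)$. In particular $(1+c)[E]=[\,\div f\,]=0$, so $[E]$ lies in the compact real Lie group $T:=\ker(1+c)\subseteq\ol C(\C)$, where $c$ is complex conjugation; and the balanced choice $a_P\in\{\lceil n_P/2\rceil,\lfloor n_P/2\rfloor\}$ keeps the pole order of $E$ at every point of $S$ below $\lceil m/2\rceil$.

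The class $[E]$ can be changed by two kinds of moves preserving $E+\ol E=\div(f)$: adjusting $a_r$ at a nonreal zero pair $\{r,\ol r\}$ alters $[E]$ by an integer multiple of $[r-\ol r]$ at no cost in pole order, whereas adjusting $a_p$ at an infinity pair $\{p,\ol p\}\subset S$ alters $[E]$ by an integer multiple of $[p-\ol p]$ at the cost of that integer in pole order. Saturatedness now amounts to the existence of \emph{some} principal split, which is the known fact that every psd element of $\R[C]$ is a sum of squares; it is recovered here by allowing arbitrarily large infinity adjustments, so that, together with the cost-free zero adjustments, the class $[E]$ can be driven to $0$. Stability, by contrast, is the statement that $0$ can always be reached with the total infinity adjustment bounded by a constant $B=B(C)$ depending only on $C$ (equivalently, on the position of $S$ in $\mathrm{Pic}^0$); this yields $m'\le\lceil m/2\rceil+B$, hence the desired uniform degree bound and, after translation back through the degree filtration, the bound on $\deg(g),\deg(h)$ in terms of $\deg(f)$.

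The hard part will be establishing this uniform bound $B$. The strategy is first to spend the cost-free zero adjustments to reduce the zero contribution $\sum_{\{r,\ol r\}}(a_r-n_r/2)[r-\ol r]$ to a controlled remainder, leaving a residual class in $T$ that is a half-integer combination of the finitely many infinity handles $[p-\ol p]$, $p\in S$, and then to annihilate this residual using infinity handles with coefficients bounded independently of $f$. I expect the two main points to be (i) a parity/torsion analysis handling the half-integers forced by odd-order nonreal zeros, and (ii) a compactness argument exploiting that $T$ is compact and $S$ finite, so that the defect classes arising from balanced splits form a bounded subset of the handle lattice. The outcome is sharp in the manner predicted by the degeneration behaviour: as a conjugate pair $\{p,\ol p\}\subset S$ collides, the curve degenerating towards a singular, noncompact one, the handle $[p-\ol p]$ tends to $0$ in $\mathrm{Pic}^0$, so the number of handles needed to correct a fixed nonzero defect---and with it $B$---must tend to infinity, consistent with the explicit lower bounds to be proved later.
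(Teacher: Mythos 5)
Your reduction of ``$f$ is a sum of two squares in $\R[C]$'' to the existence of a conjugation-split $\div(f)=E+\ol E$ with $E$ principal and polar part supported at infinity is correct, but the central claim you build on it --- that such a split always exists, and with pole order $\le\lceil m/2\rceil+B(C)$ --- is false in general, and no compactness argument can repair it. The set of classes $[E]$ you can reach is the coset $[E_0]+\sum_{\{r,\ol r\}}\Z\,[r-\ol r]+\sum_{\{p,\ol p\}\subset S}\Z\,[p-\ol p]$, a \emph{countable} subset of the circle group $T=\ker(1+c)\subset\mathrm{Pic}^0(\ol C)(\C)$; you need it to contain $0$ \emph{exactly}, and compactness of $T$ only gives approximation, never an exact hit. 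Concretely, take $f=(x-\alpha)(\beta-x)$ on $y^2+q(x)=0$: its divisor $2(\alpha,0)+2(\beta,0)-2\infty-2\ol\infty$ has no nonreal zeros, so the only freedom is the integer $a$ in $E=(\alpha,0)+(\beta,0)-a\infty-(2-a)\ol\infty$, and $E$ is principal for some $a$ only if $[(\alpha,0)+(\beta,0)]$ lies in the coset $2[\ol\infty]+\Z\,[\infty-\ol\infty]$ in $\mathrm{Pic}^2$ --- which fails for generic $C$. So this $f$ is generically \emph{not} a sum of two squares in $\R[C]$ (the Pythagoras number of $\R[C]$ is only known to be $\le4$), and your plan cannot even establish saturatedness, let alone stability. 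A secondary issue: the theorem does not assume $C(\R)$ compact, only that \emph{one} pair of points at infinity is nonreal; real points may occur in $S$, where your parity constraint $a_P=n_P/2$ breaks down. The paper handles this by a separate localization lemma reducing to the case $S=\{\infty,\ol\infty\}$.

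The paper circumvents the exact-split obstruction by never asking for one. Given $f$ with at least one nonreal zero, it uses Riemann--Roch to find $g\in\C(C)^*$ with $\div(g)+E\ge0$ (an inequality, available because $\deg E=\tfrac12\deg\Theta\ge1$, i.e.\ precisely because there \emph{is} a nonreal zero), and then subtracts the scaled norm $\tfrac1c\,g\ol g$ where $c=\max_{C(\R)}g\ol g/f$. The resulting $h=f-\tfrac1c g\ol g$ is still psd, has controlled $\delta$-degree, and has strictly fewer nonreal zeros; iterating reduces to a psd function with only real zeros. That residual function is then treated not by a two-squares representation but by the finite group (of order four, generated by the classes of $x-\alpha$ and $\beta-x$) of square classes of such functions, together with a transfer lemma that propagates a single sum-of-squares representation of $(x-\alpha)(\beta-x)$ --- whose cost $\theta((x-\alpha)(\beta-x))=N_C$ is exactly the curve-dependent constant you call $B$, and which is unbounded as the curve degenerates. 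The analytic scaling step and the order-four square-class group are the two ideas missing from your proposal; without them the Jacobian obstruction is fatal.
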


\begin{rems}\label{remsthmsosst}
\hfil
\smallskip

1.\
See \ref{condsex} for the meaning of stable or saturated. Theorem
\ref{sosstable} says that questions (1) and (2) in \ref{condsex} have
a positive answer for any finite-dimensional linear subspace $L$ of
$\R[C]$. Therefore, if $C(\R)$ is compact, the relaxation
construction \ref{lasrelax} applies and gives lifted LMI
representations of the convex hull of $C(\R)$ for any closed
embedding of $C$ into affine space. We will discuss these
applications in more detail in Section~3 below.
\smallskip

2.\
That $\Sigma\R[C]^2$ is saturated, i.e.\ that $\rm psd=sos$ holds on
$C$, was already proved in \cite{sch:tams} (and again, in much
greater generality, in \cite{sch:mz}). A special case of the
stability part of Theorem \ref{sosstable} was mentioned in \cite{ps}
(Example 2.17) without proof. The key argument was sketched in
\cite{pl} (unpublished).
\end{rems}

\begin{lab}
We always denote the nonsingular projective completion of $C$ by
$\ol C$. The (geometric) points of $C$ at infinity are by definition
the points in $\ol C(\C)\setminus C(\C)$. The condition that $C$ has
at least one nonreal point at infinity says that at least one among
these points is not real.
\end{lab}

We first show that it suffices to prove Theorem \ref{sosstable} in
the case where $\ol C(\C)\setminus C(\C)$ consists of precisely one
pair of complex conjugate points. For this we use the fact that, for
$C$ as in \ref{sosstable}, every psd element of $\R[C]$ is a sum of
squares in $\R[C]$ (see \cite{sch:mz}). The asserted reduction
follows therefore from the following lemma:

\begin{lem}\label{redlem}
Let $C$ be an affine curve over $\R$, and let $C'$ be a Zariski open
subset of $C$. Assume that every psd element of $\R[C]$ is a sum of
squares in $\R[C]$. If the preordering of sums of squares in $\R[C]$
is stable, then the preordering of sums of squares in $\R[C']$ is
stable as well.
\end{lem}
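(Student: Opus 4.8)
\emph{Proof plan.} Stability of $\Sigma\R[C']^2$ is a statement about finite-dimensional subspaces, so the plan is as follows. Fix a finite-dimensional subspace $L'\subseteq\R[C']$; I must produce a finite-dimensional $W'\subseteq\R[C']$ such that every $f\in L'\cap\Sigma\R[C']^2$ is a sum of squares of elements of $W'$. Write $Z=C\setminus C'$, a finite set of closed points, and view $\R[C]\subseteq\R[C']$ as the functions that are moreover regular along $Z$. Since $L'$ is finite-dimensional, the pole orders of its elements along $Z$ are bounded, so I can choose $g\in\R[C]$ vanishing to sufficiently high order at every point of $Z$ that $g^2L'\subseteq\R[C]$; set $V:=g^2L'$, a finite-dimensional subspace of $\R[C]$.

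The first real step is to pass from $C'$ to $C$ using the two hypotheses. Let $f\in L'\cap\Sigma\R[C']^2$. Then $f\ge0$ on $C'(\R)$, and since $g^2\ge0$ everywhere while $g$ vanishes on $Z$, the regular function $g^2f\in\R[C]$ is nonnegative at every real point of $C$: it equals $g^2\cdot f\ge0$ on $C'(\R)$ and vanishes on $Z(\R)$. Thus $g^2f$ is psd on $C(\R)$, and by the hypothesis that psd $=$ sos on $C$ we obtain $g^2f\in\Sigma\R[C]^2$. Now I invoke the assumed stability of $\Sigma\R[C]^2$ applied to the subspace $V$: it furnishes a single finite-dimensional $W\subseteq\R[C]$, depending only on $V$ (hence only on $L'$ and $g$), such that $g^2f=\sum_i c_i^2$ with all $c_i\in W$. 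Dividing in the function field $\R(C)$ gives $f=\sum_i(c_i/g)^2$.

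It remains to descend this identity into $\R[C']$ with uniform control, i.e.\ to show that each $c_i/g$ is regular on $C'$ and lies in the fixed finite-dimensional space $W':=\{\psi\in\R[C']:g\psi\in W\}$ (which is finite-dimensional because $\psi\mapsto g\psi$ embeds it into $W$). At a real point $x\in C'(\R)$ where $g$ vanishes, regularity of $f=\sum_i(c_i/g)^2$ already forces each $c_i/g$ to be regular there: the principal parts are real, and the sum of their squares, being positive, cannot cancel. The difficulty is concentrated entirely at the nonreal zeros of $g$ lying on $C'$, where the summands can cancel their poles, so that $c_i/g$ need not be regular and $W'$ need not contain it.

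I therefore expect the main obstacle to be the choice of the denominator $g$: I want $g$ to vanish along $Z$ (to clear poles) while having \emph{no} nonreal zeros on $C'$, so that all of its zeros in $C'$ are real and the preceding paragraph applies; equivalently, every nonreal zero of $g$ should already lie in $Z$. This is precisely where the real geometry of the curve enters and where one must work rather than argue formally: a generic $g$ vanishing on $Z$ will have its extra zeros nonreal, so I would instead construct $g$ by prescribing its zero divisor along the real locus $C(\R)$ (nonempty in the cases of interest) together with the points of $Z$. Once such a $g$ is at hand, $V=g^2L'$ and $W'=\{\psi\in\R[C']:g\psi\in W\}$ are finite-dimensional and depend only on $L'$, and the identity $f=\sum_i(c_i/g)^2$ exhibits every $f\in L'\cap\Sigma\R[C']^2$ as a sum of squares of elements of $W'$, proving stability of $\Sigma\R[C']^2$.
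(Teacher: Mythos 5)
Your overall strategy coincides with the paper's: clear denominators to push $L'$ into $\R[C]$, apply psd $=$ sos and stability there, then divide the resulting sum of squares back down to $\R[C']$. The first half is sound (for the positivity of $g^2f$ on all of $C(\R)$ the paper argues via density of $C'(\R)$ in $C(\R)$, using that $C$ has no isolated real points; your variant, letting $g$ vanish along $Z$ to strictly higher order than the poles of $L'$, works too). But there is a genuine gap in the descent step, and you flag it yourself without closing it: you never actually produce a denominator $g$ with the properties your argument needs. For a general $g$ vanishing on $Z$, the functions $c_i/g$ need not lie in $\R[C']$, because poles at the nonreal zeros of $g$ inside $C'$ can cancel in the sum $\sum_i(c_i/g)^2$; your proposed remedy --- prescribe the extra zeros of $g$ to be real --- is a nontrivial divisor-class problem on the curve which you leave entirely open. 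As written, the proof is therefore incomplete exactly at the point you identify as the main obstacle.

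The missing idea is to choose the denominator so that it has \emph{no} zeros on $C'$ at all, rather than trying to force its extra zeros to be real. The paper's proof starts by writing $\R[C']=\R[C]_s$ for a single element $s\in\R[C]$, i.e.\ $s$ cuts out exactly $Z=C\setminus C'$ and is a unit in $\R[C']$. Taking the denominator to be $s^n$ with $s^{2n}L'\subseteq\R[C]$, one applies stability to $L=s^{2n}L'$ to get $W$, and then $c_i/s^n$ lies in the finite-dimensional subspace $W'=s^{-n}W$ of $\R[C']$ automatically --- no analysis of real versus nonreal zeros, and no cancellation argument, is needed. Your construction of $W'=\{\psi\in\R[C']\colon g\psi\in W\}$ and your observation that real zeros cause no trouble are both correct, but they do not substitute for the unconstructed $g$; with the paper's choice of $s$ the whole difficulty evaporates.
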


\begin{proof}
There exists $s\in\R[C]$ such that $\R[C']=\R[C]_s$, the ring of
fractions $\frac f{s^n}$ with $f\in\R[C]$ and $n\ge0$. Let $L'$ be a
finite-dimensional subspace of $\R[C']$, and choose $n\ge0$ such that
$s^{2n}L'=:L$ is contained in $\R[C]$. Since $\Sigma\R[C]^2$ is
stable in $\R[C]$, there is a finite-dimensional subspace $W$ of
$\R[C]$ such that every element of $L\cap\Sigma\R[C]^2$ is a sum of
squares of elements of $W$. Now let $f\in L'\cap\Sigma\R[C']^2$. Then
$s^{2n}f$ lies in $L$, and it is psd on $C(\R)$ since $C$ has no
isolated real points (the latter by \cite{sch:mz} Thm.\ 4.18).
By assumption, therefore, $s^{2n}f\in\Sigma\R[C]^2$, hence $s^{2n}f$
is a sum of squares of elements of $W$. So if we put $W':=s^{-n}W$,
every element of $L'\cap\Sigma\R[C']^2$ is a sum of squares of
elements of $W'$.
\end{proof}

\begin{lab}\label{suff}
So it suffices to consider a nonsingular affine curve $C$ of genus
one over $\R$ with precisely one pair $\infty$, $\ol\infty$ of
complex conjugate points at infinity. Note that this implies that
$C(\R)$ is compact. It follows from Riemann-Roch that $C$ is
isomorphic to a plane affine curve with equation
$$y^2+q(x)\>=\>0,$$
where $(x,y)$ are plane affine coordinates and $q(x)\in\R[x]$ is a
monic polynomial of degree~$4$ without multiple roots. We can also
assume that $q(x)$ is indefinite, i.e.\ has ($2$~or~$4$) real roots,
since otherwise $C(\R)$ is empty (in which case the theorem is both
true and uninteresting).
Note that $\ol C$, the nonsingular projective model of $C$, is the
normalization of the Zariski closure of $C$ in $\P^2$, and is an
elliptic curve over~$\R$.

Conversely, every plane affine curve over $\R$ with equation $y^2+
q(x)=0$ with $q$ monic and separable of degree four is nonsingular of
genus one and has precisely two complex conjugate points at infinity.
\end{lab}

\begin{lab}
From now on $C$ will always be a curve as in \ref{suff}. Usually we
shall not distinguish in our notation between a polynomial $f\in
\R[x,y]$ and its restriction to $C$ (i.e.\ the image under the
canonical map $\R[x,y]\to\R[C]$. Instead of working with the ordinary
(total) degree of polynomials we will use a variant which is better
adapted to the curve $C$:

Let $\R(C)$ be the (real) function field of $C$. Given any point
$p\in C(\C)$ we let $v_p\colon\R(C)^*\to\Z$ be the associated
discrete valuation of $\R(C)$. Given $f\in\R(C)$, we'll write
$$\delta(f)\>:=\>-v_\infty(f)=-v_{\ol\infty}(f)$$
(putting $\delta(0):=-\infty$). So $\delta$ is the negative of a
discrete valuation on $\R(C)$. For any $n\ge1$, the elements $x^i$
($0\le i\le n$) and $x^jy$ ($0\le j\le n-2$) form a linear basis of
the subspace $\{f\in\R[C]\colon\delta(f)\le n\}$ of $\R[C]$.
\end{lab}

In \cite{sch:tams} Sect.~4, it was proved that every psd element of
$\R[C]$ is a sum of squares in $\R[C]$. (At that time general results
like \cite{sch:mz} Thm.\ 4.18 were not yet available.) In order to
prove the stability result of Theorem \ref{sosstable}, we first need
to review a part of the proof from \cite{sch:tams} and analyze the
involved $\delta$-degrees. This is done in the next lemma:

\begin{lem}\label{keystep}
Let $0\ne f\in\R[C]$ be psd on $C(\R)$, and assume that $f$ has at
least one nonreal zero in $C(\C)$. Then there exist $g_1$, $g_2\in
\R[C]$ with
\begin{itemize}
\advance\parskip by2pt
\item[(a)]
$\delta(f-g_1^2-g_2^2)\le\delta(f)$;
\item[(b)]
$f-g_1^2-g_2^2$ has strictly less nonreal zeros than $f$ in $C(\C)$,
or is identically zero;
\item[(c)]
$\delta(g_1)$, $\delta(g_2)\>\le\>\bigl\lceil\frac12\delta(f)\bigr
\rceil$.
\end{itemize}
\end{lem}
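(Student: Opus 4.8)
The plan is to obtain the pair $(g_1,g_2)$ as the real and imaginary parts of a single function $a\in\C[C]=\C\otimes_\R\R[C]$, writing $a=g_1+ig_2$ with $g_1,g_2\in\R[C]$, so that $g_1^2+g_2^2=a\,\sigma(a)$, where $\sigma$ denotes complex conjugation on coefficients (recall $v_q(\sigma a)=v_{\ol q}(a)$ for every point $q$). First I would record the zero divisor of $f$ on $\ol C$: since $f$ is psd its real zeros have even order, so $\div(f)=2A+N+\sigma(N)-d([\infty]+[\ol\infty])$ with $d=\delta(f)$, where $A$ is an effective real divisor, $N$ is effective and supported on nonreal points, one point from each conjugate pair taken with its multiplicity, and $N$, $\sigma(N)$ have disjoint supports. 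Comparing degrees gives $\deg A+\deg N=d$. By hypothesis $N\ne0$; fix a point $p\in\mathrm{supp}(N)$, of multiplicity $m\ge1$ in $N$.

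Next I would force $a$ to vanish on \emph{half} of $\div_0(f)$, with one unit dropped at $p$. Concretely, set $D_\infty=\lceil d/2\rceil[\infty]+\lfloor d/2\rfloor[\ol\infty]$ (of degree $d$) and look for $0\ne a\in L\bigl(D_\infty-(A+N-p)\bigr)$. This linear system has degree $d-(d-1)=1$, so by Riemann--Roch on the elliptic curve $\ol C$ it is nonempty, and any such $a$ has poles only at $\infty,\ol\infty$, hence $a\in\C[C]$. The pole bounds $-v_\infty(a),\,-v_{\ol\infty}(a)\le\lceil d/2\rceil$ give $\delta(g_1),\delta(g_2)\le\lceil d/2\rceil$, which is (c); and since the total pole order of $a$ is at most $d$ we get $\delta(a\,\sigma(a))\le d$, whence $\delta(f-g_1^2-g_2^2)\le d$, which is (a).

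For (b), by construction $\div_0(a)\ge A+(N-p)$, so $\div_0(\sigma a)\ge A+\sigma(N-p)$ and $a\,\sigma(a)$ vanishes on the $\sigma$-invariant effective divisor $D':=2A+(N-p)+\sigma(N-p)$ of degree $2d-2$. As $f$ also vanishes on $D'$, so does $h:=f-g_1^2-g_2^2\in\R[C]$. Provided $a$ has order \emph{exactly} $m-1$ at $p$ (and $0$ at $\ol p$), the $(m-1)$-st coefficient of $a\,\sigma(a)$ at $p$ is nonzero while $f$ vanishes to order $m$ there, so $v_p(h)=m-1<m$; by conjugation the same drop occurs at $\ol p$. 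Since $\delta(h)\le d$ forces $h$ to have at most $2d$ zeros, of which $2d-2$ already lie on $D'$, there remains a single $\sigma$-invariant residual pair $E$ of degree $2$, and a direct count shows $h$ has strictly fewer nonreal zeros than $f$ precisely when $E$ consists of two real points (or $h\equiv0$).

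The step requiring genuine work — and the main obstacle — is thus twofold. The \emph{exact}-order requirement fails only if $a$ lies in a strictly smaller system of degree $0$, nonzero on the elliptic curve only for finitely many special classes; these I would avoid using the freedom in the choice of $p\in\mathrm{supp}(N)$, the distribution of the allowed poles between $\infty$ and $\ol\infty$, and the representative $N$. The more serious point is to make the residual pair $E$ real rather than a new nonreal conjugate pair: replacing $a$ by $\lambda a$ scales $a\,\sigma(a)$ by the positive real $|\lambda|^2$, so with $w:=a\,\sigma(a)$ one studies the family $h_t=f-t\,w$ and must exhibit $t>0$ for which the two residual zeros (the solutions of $f=t\,w$ off $D'$) land on the real locus. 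As $t\to0^+$ this pair starts at $\{p,\ol p\}$ and stays nonreal, so one has to push it across; here the psd hypothesis on $f$ is exactly what guarantees the pair migrates onto $C(\R)$ for a suitable scale, in the same way that in the model case $\delta(f)=1$ — where $L(D_\infty-(A+N-p))=L([\infty])=\C$, $w$ is a constant, and $h_t=f-t$ is a psd linear form — increasing $t$ drives the zero of $f-t$ out of $\{q>0\}$ into the region $\{q\le0\}$ carrying $C(\R)$.
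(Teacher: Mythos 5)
Your setup --- take $a\in\C[C]$ with poles split as $\lceil d/2\rceil\,\infty+\lfloor d/2\rfloor\,\ol\infty$ and with vanishing prescribed via Riemann--Roch, then put $a=g_1+ig_2$ --- is essentially the paper's, and it does deliver (a) and (c). But (b) is not proved. You correctly isolate the two difficulties, namely forcing $a$ to vanish to \emph{exact} order $m-1$ at $p$ and to order $0$ at $\ol p$, and forcing the degree-two residual divisor of $h$ to be real, and then you resolve neither. For the first: the system $L\bigl(D_\infty-(A+N-p)\bigr)$ has degree one, hence a one-dimensional space of sections, so $a$ is determined up to a scalar and can perfectly well spend its single free zero at $\ol p$ or at $p$; ``using the freedom in the choice of $p$, the pole distribution and the representative $N$'' is an assertion, not an argument (these accidental vanishings are governed by conditions in $\mathrm{Pic}^0$ of the elliptic curve which your finitely many choices do not obviously avoid). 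For the second: the claim that ``the psd hypothesis is exactly what guarantees the pair migrates onto $C(\R)$ for a suitable scale'' is precisely the statement to be proved, and a continuity argument in $t$ does not by itself force the residual pair to cross the real locus rather than collide with $D'$ or drift to $\infty,\ol\infty$. Note also that even a real but reduced residual pair would make $h$ change sign, which ruins the inductive use of the lemma.

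The missing idea is a normalization that makes all of this irrelevant. Impose vanishing of $a$ only along the real half-divisor $A$ (the system then has degree $\frac12\deg(N+\sigma N)\ge1$, so it is nonempty); then $\varphi:=a\sigma(a)/f$ is regular on the compact set $C(\R)$, and one takes $t=1/c$ with $c:=\max_{C(\R)}\varphi>0$. With this choice $h=f-\frac1c\,a\sigma(a)$ is again psd, satisfies $v_q(h)\ge v_q(f)$ at every real point $q$ and $v_{q_0}(h)\ge v_{q_0}(f)+2$ at a maximizer $q_0$; since $\delta(h)\le\delta(f)$ bounds the total number of zeros of $h$ by that of $f$, the number of nonreal zeros strictly drops. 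This extremal choice of the scale is the step your sketch leaves open, and it removes any need to track exact vanishing orders at $p$ or the residual divisor pointwise.
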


Here the zeros of $0\ne f\in\R[C]$ in $C(\C)$ are counted with
multiplicities. As usual we write $\lceil x\rceil=\min\{n\in\Z\colon
x\le n\}$ for $x\in\R$.

\begin{proof}
Let $m=\delta(f)\ge1$. All divisors are calculated on the
complexified curve $\ol C_\C$, i.e.\ they are finite integral linear
combinations of the points in $\ol C(\C)$. We write
$$\div(f)=2D+\Theta-m(\infty+\ol\infty)$$
where $D$, $\Theta$ are conjugation-invariant effective divisors such
that the support of $D$ contains only real points and the support of
$\Theta$ contains no real point. By hypothesis $\Theta\ne0$. Let $p=
q=\frac m2$ if $m$ is even, and put $p=\frac{m+1}2$, $q=\frac{m-1}2$
if $m$ is odd. Then $p+q=m$, and the divisor $E:=-D+p\infty+
q\ol\infty$ satisfies $\deg(E)=\frac12\deg(\Theta)\ge1$.
By Riemann-Roch there exists $g\in\C(C)^*$ with $\div(g)+E\ge0$,
hence with
$$\div(g\ol g)\>\ge\>2D-m(\infty+\ol\infty).$$
It follows that $g\ol g\in\R[C]$, and the rational function $\varphi
:=g\ol g/f$ on $C$ has no poles in $C(\R)$. Let $c>0$ be the maximum
value that $\varphi$ takes on the compact set $C(\R)$, say $\varphi
(p)=c$ with $p\in C(\R)$. The regular function $h:=f-\frac1c\,g\ol g$
on $C$ is psd on $C(\R)$ and vanishes at $p$. From $\delta(g\ol g)\le
m$ we see $\delta(h)\le m$. Writing $\frac1{\sqrt c}g=g_1+ig_2$ with
$g_1$, $g_2\in\R[C]$ we have $\frac1c\,g\ol g=g_1^2+g_2^2$, and we
see
$$\delta(g_1),\ \delta(g_2)\>\le\>\max\{p,q\}\>=\>\Bigl\lceil\frac m2
\Bigr\rceil.$$
For every point $q\in C(\R)$ we have $v_q(h)\ge v_q(f)$, and even
$v_p(h)\ge2+v_p(f)$ if $q=p$.
Counting with multiplicity, $h$ has therefore strictly more real
zeros on $C$ than $f$. Since $\delta(h)\le\delta(f)$, we see that $h$
has strictly less nonreal zeros than $f$ (or else $h=0$).
\end{proof}

By applying Lemma \ref{keystep} inductively, we obtain the following
reduction to psd regular functions with only real zeros:

\begin{prop}\label{keycor}
Let $0\ne f\in\R[C]$ be psd. There are finitely many regular
functions $0\ne g_1,\dots,g_r\in\R[C]$ ($r\ge0$) such that
\begin{itemize}
\advance\parskip by2pt
\item[(a)]
$h:=f-(g_1^2+\cdots+g_r^2)$ is psd on $C(\R)$;
\item[(b)]
$h=0$, or all zeros of $h$ on $C$ are real;
\item[(c)]
$\delta(g_i)\le\bigl\lceil\frac12\delta(f)\bigr\rceil$ for $i=1,
\dots,r$, and $\delta(h)\le\delta(f)$.
\qed
\end{itemize}
\end{prop}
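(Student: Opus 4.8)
The plan is to prove Proposition~\ref{keycor} by repeatedly invoking Lemma~\ref{keystep} and controlling the number of iterations. The lemma takes a psd $f$ with at least one nonreal zero and peels off a single term $g_1^2+g_2^2$ so that the remainder $f-g_1^2-g_2^2$ has strictly fewer nonreal zeros, all while keeping $\delta(f-g_1^2-g_2^2)\le\delta(f)$ and $\delta(g_1),\delta(g_2)\le\lceil\tfrac12\delta(f)\rceil$. So the natural approach is an induction on the number $N$ of nonreal zeros of $f$ in $C(\C)$, counted with multiplicity.

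First I would set up the induction. If $f$ has no nonreal zeros, we are already done: take $r=0$, so $h=f$, and conditions (a)--(c) hold trivially (with $\delta(h)=\delta(f)$). Otherwise $f$ has at least one nonreal zero, so Lemma~\ref{keystep} applies and produces $g_1,g_2\in\R[C]$. Set $f_1:=f-g_1^2-g_2^2$. By part~(b) of the lemma, either $f_1=0$ or $f_1$ has strictly fewer nonreal zeros than $f$; in either case $f_1$ is psd on $C(\R)$ (being a difference whose value is nonnegative, as is implicit in the construction $h=f-\frac1c g\ol g$ with $c$ the maximum of $g\ol g/f$). By part~(a), $\delta(f_1)\le\delta(f)$. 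The crucial point for the degree bound is that I must apply the lemma at each stage with the \emph{same} target bound $\lceil\tfrac12\delta(f)\rceil$, not a bound that depends on the current remainder.

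The main obstacle is precisely this uniformity of the degree bound across all iterations. Lemma~\ref{keystep} only guarantees $\delta(g_1^{(j)}),\delta(g_2^{(j)})\le\lceil\tfrac12\delta(f_{j-1})\rceil$ at the $j$-th step, where $f_{j-1}$ is the current remainder. To get the clean bound $\lceil\tfrac12\delta(f)\rceil$ in part~(c), I would use the monotonicity $\delta(f_j)\le\delta(f_{j-1})\le\cdots\le\delta(f)$, which follows by chaining part~(a) of the lemma through the iteration. Since $\lceil\tfrac12(\cdot)\rceil$ is monotone, $\lceil\tfrac12\delta(f_{j-1})\rceil\le\lceil\tfrac12\delta(f)\rceil$, so each $g_i$ produced at every stage satisfies the uniform bound. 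Relabeling all the functions $g_1^{(j)},g_2^{(j)}$ collected over the finitely many steps as $g_1,\dots,g_r$ then gives condition~(c) for the $g_i$, and $\delta(h)\le\delta(f)$ for the final remainder.

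Finally I would address termination. Because the number of nonreal zeros is a nonnegative integer that strictly decreases at each step (by part~(b)) unless the remainder becomes identically zero, the process halts after at most $N$ steps, where $N$ is the number of nonreal zeros of the original $f$. The terminal remainder $h$ is then either $0$ or has only real zeros, giving (b), and remains psd on $C(\R)$, giving (a). Discarding any term $g_i$ that happens to be zero, all the surviving $g_i$ are nonzero as required. Thus assembling the pieces from all iterations yields the desired finite collection $g_1,\dots,g_r$ and the function $h$ satisfying (a)--(c), completing the proof.
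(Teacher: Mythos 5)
Your proposal is correct and follows exactly the route the paper intends: the paper states Proposition~\ref{keycor} with only the remark ``by applying Lemma~\ref{keystep} inductively,'' and your write-up supplies precisely that induction on the number of nonreal zeros, including the two details worth making explicit (the intermediate remainders stay psd by the construction $h=f-\frac1c\,g\ol g$, and the uniform bound $\lceil\frac12\delta(f)\rceil$ follows from the monotonicity $\delta(f_j)\le\delta(f)$ chained through part~(a)). No gaps.
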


\begin{rem}
From Lemma \ref{keystep} we see that the number $r$ of squares in
Proposition \ref{keycor} can be bounded by the number of nonreal
zeros of $f$ in $C(\C)$, counted with multiplicities. In other words,
$r\le2(m-k)$ where $\delta(f)=m$ and $2k$ is the number of real zeros
of $f$, counted with multiplicities. On the other hand, it is well
known that the Pythagoras number of $\R[C]$ is $\le4$.
\end{rem}

The second step consists in studying the nonnegative regular
functions on $C$ with only real zeros. We will see that part of the
conclusions made in Example \ref{sex} for linear psd polynomials
generalizes to psd polynomials of any degree on $C$. Recall that $C$
has the affine equation $y^2+q(x)=0$ where the monic quartic
polynomial $q(x)\in\R[x]$ is square-free and indefinite. Let $\alpha<
\beta$ denote the smallest resp.\ the largest real zero of $q(x)$.
Let $\R(C)$ be the function field of $C$.

\begin{prop}\label{psdrzgp}
Let $G$ be the subgroup of $\R(C)^*/\R(C)^{*2}$ which is generated by
the cosets $f\R(C)^{*2}$ of all psd $0\ne f\in\R[C]$ which have only
real zeros on $C$. Then $G$ has order four and is generated by the
cosets of $x-\alpha$ and of $\beta-x$.
\end{prop}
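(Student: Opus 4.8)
The plan is to identify the finite group $G$ by analyzing the divisors of the psd regular functions on $C$ that have only real zeros. First I would recall that $C$ has affine equation $y^2+q(x)=0$ with $q$ monic, separable, quartic, and indefinite, so that $\R[C]=\R[x,y]/(y^2+q(x))$ and $\R(C)=\R(x)[y]$ with $y^2=-q(x)$. Since $q$ is indefinite, $-q(x)\ge0$ exactly on the real locus, and the real points $C(\R)$ project to the $x$-axis onto the closed set $\{x\in\R\colon q(x)\le0\}$, whose extreme points are $\alpha$ and $\beta$. The two distinguished functions are $x-\alpha$ and $\beta-x$: on $C(\R)$ we have $\alpha\le x\le\beta$, so both are psd, and both have only real zeros on $C$ (the zeros lie over $x=\alpha$ and $x=\beta$, which are real branch points). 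Thus their cosets lie in $G$, and I would first check they generate a subgroup of order four, i.e.\ that $x-\alpha$, $\beta-x$ and their product are pairwise inequivalent modulo squares and none is a square in $\R(C)^*$.

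The heart of the argument is the reverse inclusion: every psd $0\ne f\in\R[C]$ with only real zeros is, modulo $\R(C)^{*2}$, a product of the cosets of $x-\alpha$ and $\beta-x$. I would argue via divisors on the complexified curve $\ol C_\C$. Write $\div(f)=2D+\Theta-m(\infty+\ol\infty)$ as in the proof of Lemma \ref{keystep}; the hypothesis that all zeros of $f$ are real forces $\Theta=0$, so $\div(f)+m(\infty+\ol\infty)=2D$ is twice an effective real divisor supported on real points. Modulo squares, $f$ is therefore determined by the \emph{parities} of the multiplicities of $f$ at the real points of $\ol C$ together with the parity at infinity. The crucial point is that $f$ being psd on $C(\R)$ constrains these parities: at any real point $p$ of $C$ where $f$ changes sign along the real curve, $v_p(f)$ must be even; the only places where odd order is permitted are the real branch points, i.e.\ the points lying over the real roots of $q$, where the two sheets come together and the local picture is a single real point of $\ol C$.

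This reduces the problem to a parity computation at the real Weierstrass points. The real ramification points of $x\colon\ol C\to\P^1$ lie over the real roots of $q(x)$ (there are two or four of them) and over $x=\infty$. Passing from $x=-\infty$ up to $x=+\infty$ along the real axis, the sign of $-q(x)$ (hence the reality of the fibre of $C$) flips exactly at each real root; between $\alpha$ and $\beta$ the fibre is real, outside it is nonreal. I would show that any admissible parity vector -- subject to the psd sign conditions and to the constraint that $\div(f)+m(\infty+\ol\infty)$ be even, i.e.\ a principal-divisor parity relation coming from $\div(x-\gamma)=\ldots$ and $\div(y)$ -- can be realized by a product of $x-\alpha$, $\beta-x$, using that the other real roots $\gamma$ of $q$ satisfy $x-\gamma=(x-\alpha)+(\alpha-\gamma)$ and are \emph{not} psd on $C(\R)$, so their odd-order contributions must cancel in pairs or combine with the boundary roots. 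The main obstacle is precisely this bookkeeping: controlling which combinations of odd vanishing orders at the interior branch points are compatible simultaneously with psd-ness and with being a square times a product of the two boundary linear forms. I expect that the sign-change analysis of $f$ across the real interval $[\alpha,\beta]$ -- counting that an odd-order real zero forces a sign change, which psd-ness forbids except at the endpoints -- pins the coset of $f$ down to one of the four classes $1$, $x-\alpha$, $\beta-x$, $(x-\alpha)(\beta-x)$, completing the identification $|G|=4$.
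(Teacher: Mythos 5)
Your first half is sound in outline: $x-\alpha$ and $\beta-x$ are psd with only real zeros, the class of each is nontrivial because $\div(x-\alpha)=2(\alpha,0)-\infty-\ol\infty$ has odd coefficients at infinity, and the class of the product is nontrivial because $(\alpha,0)-(\beta,0)$ is a nonzero $2$-torsion divisor class on the genus-one curve $\ol C$. The problem is the second half, which is where all the content lies. Your local picture there is wrong: you claim a psd $f$ may vanish to odd order at the real branch points of $x\colon\ol C\to\P^1$ ``where the two sheets come together.'' But $C$ is nonsingular, so $C(\R)$ is a one-dimensional manifold at \emph{every} real point, including $(\alpha,0)$, and psd-ness forces $v_p(f)$ to be even at every $p\in C(\R)$; concretely $v_{(\alpha,0)}(x-\alpha)=2$, since $y$ is a uniformizer there. (You in fact already used this when you wrote $\div(f)+m(\infty+\ol\infty)=2D$.) So the nontriviality of the class of $x-\alpha$ is detected at the two nonreal points at infinity, not by an odd order at a branch point, and the ``parity vector at the real Weierstrass points'' you propose to analyze is identically even and carries none of the information you need.

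Even after correcting the parities, the assertion that $f$ is determined modulo squares by the parities of its divisor is false: the homomorphism $f\R(C)^{*2}\mapsto\div(f)\bmod 2$ has a nontrivial kernel governed by the $2$-torsion of $\mathrm{Pic}(\ol C_\C)$ (which is $(\Z/2)^2$, as $\ol C$ is elliptic) together with the sign of a constant. Indeed $(x-\alpha)(\beta-x)$ has an everywhere-even divisor and still is not a square. Deciding which of these residual classes are actually hit by psd regular functions with only real zeros is precisely the content of the bound $|G|\le4$, and your sketch leaves it at ``I expect that the sign-change analysis \dots pins the coset down,'' which is not an argument. For comparison, the paper's own proof is a two-line reduction --- the two classes lie in $G$ and are independent, so it suffices to know $|G|=4$ --- followed by a citation of \cite{sch:tams}, Prop.~4.3, where $|G|$ is computed; a self-contained proof would have to work with the real divisor class group and the $2$-torsion of the Jacobian rather than with vanishing parities at branch points.
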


\begin{proof}
Since the square classes of $x-\alpha$ and $\beta-x$ lie in $G$ and
are independent, it is enough to show $|G|=4$. This was done in
\cite{sch:tams} Prop.\ 4.3, where $|G|$ was calculated in a more
general setting.
\end{proof}

\begin{dfn}
Let $0\ne f\in\R[C]$. By $\theta(f)$ we denote the least integer
$d\ge0$ for which there exists a sums of squares representation $f=
f_1^2+\cdots+f_r^2$ with $r\in\N$ and $f_i\in\R[C]$ such that $\delta
(f_i)\le d$ for $i=1,\dots,r$. We put $\theta(f)=\infty$ if $f$ is
not a sum of squares in $\R[C]$.
\end{dfn}

Note that one obviously has $\theta(f+g)\le\max\{\theta(f),\,\theta
(g)\}$ and $\theta(fg)\le\theta(f)+\theta(g)$.

\begin{lem}\label{lemred2gens}
Let $0\ne f$, $g\in\R[C]$ be psd. Assume that $g$ has only real zeros
on $C$ and that $f/g$ is a square in $\R(C)$. If $g=b_1^2+\cdots+
b_r^2$ with $b_1,\dots,b_r\in\R[C]$,
then there exist $a_1,\dots,a_r\in\R[C]$ with $f=a_1^2+\cdots+a_r^2$
and with
$$\delta(a_i)\>=\>\delta(b_i)+\frac12\Bigl(\delta(f)-\delta(g)
\Bigr)$$
($i=1,\dots,r$). In particular we have
$$2\theta(f)-\delta(f)\>\le\>2\theta(g)-\delta(g).$$
\end{lem}

\begin{proof}
Let $h\in\R(C)^*$ with $\frac fg=h^2$. We have $f=\sum_i(b_ih)^2$, so
it suffices to show that $a_i:=b_ih$ lies in $\R[C]$ and $\delta
(a_i)$ satisfes the identity of the lemma ($i=1,\dots,r$). Every pole
of $a_i$ on $C$ is a zero of $g$, so it is real by the assumption. On
the other hand, for $i=1,\dots,r$ and for every point $p\in C(\R)$ we
have $v_p(g)\le2v_p(b_i)$, hence $v_p(h)\ge\frac12v_p(f)-v_p(b_i)$
and $v_p(a_i)\ge\frac12v_p(f)\ge0$. This proves $a_i\in\R[C]$.
Clearly $\delta(a_i)=\delta(b_ih)=\delta(b_i)+\frac12(\delta(f)
-\delta(g))$, and this implies $\theta(f)\le\theta(g)+\frac12(\delta
(f)-\delta(g))$.
\end{proof}

In Lemma \ref{lemred2gens}, note that we have in fact $\theta(f)
-\theta(g)=\frac12(\delta(f)-\delta(g))$ if both $f$ and $g$ have only
real zeros.

This discussion leads to the following result. It completes the proof
of Theorem \ref{sosstable}:

\begin{thm}\label{thmsum}
Let $q$ be a quartic monic polynomial which is indefinite and has no
multiple roots, and let $C$ be the affine curve $y^2+q(x)=0$ over
$\R$. There is an integer $N\ge1$ such that
$$\theta(f)\>\le\>N+\Bigl\lceil\frac12\delta(f)\Bigr\rceil$$
holds for every psd regular function $f$ in $\R[C]$.
\end{thm}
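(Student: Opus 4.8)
The plan is to prove Theorem~\ref{thmsum} by reducing an arbitrary psd $f$ to a psd function with only real zeros (via Proposition~\ref{keycor}), and then handling the real-zero case using Proposition~\ref{psdrzgp} together with Lemma~\ref{lemred2gens}. First I would apply Proposition~\ref{keycor} to write $f=g_1^2+\cdots+g_r^2+h$, where $h$ is psd with only real zeros (or $h=0$), each $\delta(g_i)\le\lceil\frac12\delta(f)\rceil$, and $\delta(h)\le\delta(f)$. Since $\theta(f)\le\max\{\theta(g_1^2),\dots,\theta(g_r^2),\theta(h)\}$ and each $\theta(g_i^2)\le\delta(g_i)\le\lceil\frac12\delta(f)\rceil$, the problem is reduced to bounding $\theta(h)$ for a psd $h$ with only real zeros, in terms of $\delta(h)\le\delta(f)$.

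For the real-zero case, the key idea is that Proposition~\ref{psdrzgp} controls the square class of any such $h$: the group $G$ generated by the square classes of psd functions with only real zeros has order four, generated by the classes of $x-\alpha$ and $\beta-x$. So modulo squares in $\R(C)$, every psd $h$ with only real zeros is equivalent to one of the four products $1$, $x-\alpha$, $\beta-x$, $(x-\alpha)(\beta-x)$. I would fix, once and for all, four ``reference'' psd functions $g^{(0)},\dots,g^{(3)}\in\R[C]$ representing these four square classes (for instance $g^{(0)}=1$, together with suitable representatives built from $x-\alpha$, $\beta-x$, whose existence as psd regular functions with only real zeros is guaranteed by Proposition~\ref{psdrzgp}), and record a finite quantity $N_0:=\max_j\bigl(2\theta(g^{(j)})-\delta(g^{(j)})\bigr)$, which is a constant depending only on $q$.

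Now given a psd $h$ with only real zeros, it lies in the same square class as exactly one reference function $g^{(j)}$, so $h/g^{(j)}$ is a square in $\R(C)$. The inequality from Lemma~\ref{lemred2gens}, namely $2\theta(h)-\delta(h)\le 2\theta(g^{(j)})-\delta(g^{(j)})\le N_0$, gives $\theta(h)\le\frac12 N_0+\frac12\delta(h)\le\frac12 N_0+\lceil\frac12\delta(f)\rceil$. Combining with the bound from the first paragraph, I would set $N:=\lceil\frac12 N_0\rceil+1$ (or some such explicit constant absorbing the reference data), yielding $\theta(f)\le N+\lceil\frac12\delta(f)\rceil$ for all psd $f$, as required. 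The main obstacle I anticipate is the correct choice of the reference functions and verifying that Lemma~\ref{lemred2gens} applies cleanly: that lemma requires $g^{(j)}$ to have only real zeros and $h/g^{(j)}$ to be a square, so I must ensure each reference representative genuinely has only real zeros (not merely the right square class), and I must double-check the direction of the inequality and the ceiling bookkeeping so that the additive constant $N$ is truly independent of $f$. The finiteness of $G$ is what makes a single uniform $N$ possible; without it one could not bound $\theta$ by finitely many reference values.
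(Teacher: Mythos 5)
Your proposal follows essentially the same route as the paper's proof: reduce to the real-zero case via Proposition~\ref{keycor}, use Proposition~\ref{psdrzgp} to place $h$ in one of the four square classes represented by $1$, $x-\alpha$, $\beta-x$, $(x-\alpha)(\beta-x)$, and apply Lemma~\ref{lemred2gens} to transfer the bound; the paper takes these very products as the reference functions (citing that each is a sum of squares with only real zeros, which settles the one point you flagged) and sets $N=\max\{\theta(l_1),\theta(l_2),\theta(l_1l_2)\}$. The argument is correct.
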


\begin{proof}
Let $\alpha$ (resp.\ $\beta$) be the smallest (resp.\ largest) real
zero of $q(x)$, and put $l_1=x-\alpha$, $l_2=\beta-x$. Each of $l_1$,
$l_2$ and $l_1l_2$ has only real zeros on $C$ and is a sum of squares
in $\R[C]$ (\cite{sch:tams} Thm.\ 4.10(a)). We claim that the theorem
holds with $N=\max\{\theta(l_1),\,\theta(l_2),\,\theta(l_1l_2)\}$.

To see this let $0\ne f\in\R[C]$ be psd. By Prop.\ \ref{keycor} there
exists a psd element $h\in\R[C]$ which is either identically zero or
has only real zeros on $C$, such that $\theta(f-h)\le\lceil\frac12
\delta(f)\rceil$ and $\delta(h)\le\delta(f)$. We can assume $h\ne0$.
By Proposition \ref{psdrzgp} there is $g\in\{1,\,l_1,\,l_2,\,l_1
l_2\}$ such that $h/g$ is a square in $\R(C)^*$, and by Lemma
\ref{lemred2gens} we have $\theta(h)\le\theta(g)+\frac12(\delta(h)
-\delta(g))$.
So we conclude $\theta(h)\le N+\frac12(\delta(f)+1)$. Hence the same
bound holds for $\theta(f)$ since $\theta(f)\le\max\{\theta(h)$,
$\theta(f-h)\}$.
\end{proof}

\begin{rem}\label{remthmsum0}
Theorem \ref{thmsum} is a sharpening of the stability assertion of
Theorem \ref{sosstable}, as far as the plane curves $y^2+q(x)=0$ are
concerned that are considered in \ref{thmsum}. We would like to point
out that \ref{thmsum} also yields a similar sharpening for the other
curves discussed in \ref{sosstable}. Indeed, the reduction lemma
\ref{redlem} and its proof are explicit enough to permit a transfer
of the assertion of \ref{thmsum} to Zariski open subcurves. Although
we won't make this more explicit, it justifies to restrict the
remaining discussions to plane curves as in \ref{thmsum}.
\end{rem}

\begin{rem}\label{remthmsum}
A closer inspection of the last proof exhibits that Theorem
\ref{thmsum} is true with
$$N\>=\>\theta\bigl((x-\alpha)(\beta-x)\bigr)-1,$$
where $\alpha$ is the smallest and $\beta$ is the largest real root
of $q(x)$. Clearly, this is the smallest possible $N$, as we see by
taking $f=(x-\alpha)(\beta-x)$ in Theorem \ref{thmsum}.

Indeed, let us abbreviate $l_1=x-\alpha$ and $l_2=\beta-x$. From $l_1
+l_2=\beta-\alpha$ we get $(\beta-\alpha)l_1=l_1^2+l_1l_2$, which
implies $\theta(l_1)\le\theta(l_1l_2)$. Similarly $\theta(l_2)\le
\theta(l_1l_2)$. Let us distinguish the argument according to the
parity of $\delta(h)$. If $\delta(h)$ is even then $g=1$ or $g=l_1
l_2$, and $g=1$ gives the bound $\theta(h)\le\frac12\delta(h)\le
\frac12\delta(f)$, while $g=l_1l_2$ gives the bound $\theta(h)\le
\theta(l_1l_2)+\frac12\delta(h)-1\le\theta(l_1l_2)+\frac12\delta(f)
-1$. If $\delta(h)$ is odd then $g=l_j$ for $j\in\{1,2\}$, and this
gives the bound $\theta(h)\le\theta(l_j)+\frac12(\delta(h)-1)$,
which is at most $\theta(l_1l_2)+\lceil\frac12\delta(f)\rceil-1$.
\end{rem}

\begin{dfn}\label{dfnnc}
Let $C$ have equation $y^2+q(x)=0$ with $q$ monic, separable and
indefinite of degree four, and let $\alpha<\beta$ be the smallest
resp.\ largest real root of $q$. We write
$$N_C\>:=\>\theta\bigl((x-\alpha)(\beta-x)\bigr)\ \in\N,$$
and we call $N_C$ the \emph{stability constant} of the curve $C$.
\end{dfn}

Remark \ref{remthmsum} has shown:

\begin{cor}\label{corthmsum}
For every psd $f\in\R[C]$ we have
$$\theta(f)\>\le\>N_C-1+\Bigl\lceil\frac12\delta(f)\Bigr\rceil.
\eqno\square$$
\end{cor}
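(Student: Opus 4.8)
The plan is to read off the Corollary directly from Remark~\ref{remthmsum}, which already did the substantive work. Remark~\ref{remthmsum} established that Theorem~\ref{thmsum} holds with the explicit value $N=\theta\bigl((x-\alpha)(\beta-x)\bigr)-1$, where $\alpha<\beta$ are the smallest and largest real roots of $q$. By Definition~\ref{dfnnc} this quantity is exactly $N_C-1$. So the proof is essentially a matter of substituting $N=N_C-1$ into the inequality $\theta(f)\le N+\lceil\frac12\delta(f)\rceil$ furnished by Theorem~\ref{thmsum}.

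Concretely, I would proceed as follows. First I would fix an arbitrary psd element $0\ne f\in\R[C]$; note that $f\equiv0$ is trivial and may be excluded by the convention $\delta(0)=-\infty$. Then I would invoke Theorem~\ref{thmsum} in the sharpened form recorded in Remark~\ref{remthmsum}, which guarantees
\[
\theta(f)\>\le\>\bigl(N_C-1\bigr)+\Bigl\lceil\tfrac12\delta(f)\Bigr\rceil .
\]
Unwinding the definition $N_C=\theta\bigl((x-\alpha)(\beta-x)\bigr)$ shows this is precisely the claimed bound, so nothing further is required.

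There is really no obstacle here: the Corollary is a restatement, and the only thing to verify is the bookkeeping identity $N_C-1=\theta\bigl((x-\alpha)(\beta-x)\bigr)-1$, which is immediate from Definition~\ref{dfnnc}. If I wanted to be self-contained rather than merely citing the Remark, I would reprove the $\delta(h)$-parity case distinction: writing $f=h+(f-h)$ via Proposition~\ref{keycor} with $\theta(f-h)\le\lceil\frac12\delta(f)\rceil$ and $\delta(h)\le\delta(f)$, then using Proposition~\ref{psdrzgp} to find $g\in\{1,l_1,l_2,l_1l_2\}$ with $h/g$ a square, applying Lemma~\ref{lemred2gens}, and finally using $\theta(l_1),\theta(l_2)\le\theta(l_1l_2)$ (which follows from $(\beta-\alpha)l_1=l_1^2+l_1l_2$) to collapse all four cases into the single bound $\theta(h)\le N_C-1+\lceil\frac12\delta(f)\rceil$. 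But since Remark~\ref{remthmsum} has already carried out exactly this computation, the cleanest proof is simply to point to it.
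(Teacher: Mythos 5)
Your proof is correct and is exactly the paper's argument: the Corollary is stated as an immediate consequence of Remark~\ref{remthmsum} (which shows Theorem~\ref{thmsum} holds with $N=\theta\bigl((x-\alpha)(\beta-x)\bigr)-1=N_C-1$), and the paper gives no further proof beyond that citation. Your optional sketch of the parity case distinction is precisely the content of that Remark, so nothing is missing.
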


\begin{cor}
In the terminology of \cite{gpt}, the ideal $\scrI_C=(y^2+q(x))$ of
$C$ in $\R[x,y]$ is $(d,\,N_C+d-1)$-sos for every $d\ge1$. In
particular, this ideal is theta-exact of theta-rank $N_C$.
\end{cor}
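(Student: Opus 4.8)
The plan is to reduce the statement to Corollary~\ref{corthmsum} by matching the total degree appearing in the definition of $(d,k)$-sos against the valuation degree $\delta$ that governs $\theta$. Everything rests on two inclusions relating the filtration by total degree to the filtration $\{g\in\R[C]\colon\delta(g)\le n\}$. On one side, $\delta(x)=1$ and $\delta(y)=2$ (since $y^2=-q(x)$ with $q$ monic of degree four), so a monomial $x^iy^j$ of total degree $i+j\le d$ has $\delta(x^iy^j)=i+2j\le2(i+j)\le2d$; as $-\delta$ is a valuation, every element of $A_d$ then satisfies $\delta\le2d$, i.e.\ $A_d\subseteq\{g\colon\delta(g)\le2d\}$. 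On the other side, the explicit basis $x^i$ ($0\le i\le n$), $x^jy$ ($0\le j\le n-2$) of $\{g\colon\delta(g)\le n\}$ consists of polynomials of total degree $\le n$, giving $\{g\colon\delta(g)\le n\}\subseteq A_n$.

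With these inclusions I would check conditions (1) and (2) of \ref{condsex} for $L=A_d$ and $W=A_{N_C+d-1}$. Condition~(1) is immediate from Theorem~\ref{sosstable}: the sums of squares cone of $\R[C]$ is saturated, so every psd element of $A_d$ is already a sum of squares. For condition~(2), let $f\in A_d$ be psd. The first inclusion gives $\delta(f)\le2d$, so Corollary~\ref{corthmsum} yields
$$\theta(f)\ \le\ N_C-1+\Bigl\lceil\tfrac12\delta(f)\Bigr\rceil\ \le\ N_C-1+d.$$
Hence $f$ has a representation $f=\sum_if_i^2$ with $\delta(f_i)\le N_C+d-1$ for all $i$, and by the second inclusion each $f_i$ lies in $W=A_{N_C+d-1}$. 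This establishes that $\scrI_C$ is $(d,N_C+d-1)$-sos.

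For the closing assertion I would specialize to $d=1$, obtaining that $\scrI_C$ is $(1,N_C)$-sos. The ideal $\scrI_C=(y^2+q(x))$ is real radical: $y^2+q(x)$ is irreducible over $\R$, since $-q(x)$ --- having negative leading coefficient --- is not a square in $\R(x)$, and $C(\R)$ is infinite, hence Zariski dense in $C$. The equivalence recorded in the excerpt (with $L=A_1$) then shows that $\scrI_C$ is $\Th_{N_C}$-exact, so its theta-rank is at most $N_C$. I expect the remaining difficulty to be pinning the theta-rank down to exactly $N_C$: the degree translation above is lossy by a factor of two, so the sharpness of Corollary~\ref{corthmsum} in the $\delta$-grading --- witnessed by $f=(x-\alpha)(\beta-x)$ in Remark~\ref{remthmsum} --- does not pass verbatim to the total-degree relaxations defining $\Th_k$. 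Proving optimality would instead require exhibiting a psd \emph{linear} form that is not a sum of squares of elements of $A_{N_C-1}$, which cannot be read off from the $\theta$--$\delta$ estimate alone.
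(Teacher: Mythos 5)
Your argument is essentially identical to the paper's proof: the paper likewise passes from total degree $d$ to $\delta(\ol p)\le 2d$, applies Corollary~\ref{corthmsum} to obtain summands with $\delta(\ol p_j)\le N_C+d-1$, and converts back to total degree via the monomial basis of $\{g\in\R[C]\colon\delta(g)\le n\}$, so the two inclusions you isolate are exactly the content of that proof. Your closing reservation about pinning the theta-rank down to exactly $N_C$ is well taken but does not count against you, since the paper's own proof likewise only establishes the $(1,N_C)$-sos property (hence theta-rank $\le N_C$) and does not exhibit a psd linear form lying outside $M_{A_{N_C-1}}$.
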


\begin{proof}
Let $p\in\R[x,y]$ have degree $d$, let $\ol p=p+\scrI_C\in\R[C]$. We
have $\delta(\ol p)\le2d$, so if $p$ is psd on $C(\R)$, Corollary
\ref{corthmsum} shows that $p\equiv\sum_jp_j(x,y)^2$ (mod~$\scrI_C$)
where $\delta(\ol p_j)\le N_C+d-1$ for every $j$. Thus every $p_j$ is
congruent modulo $\scrI_C$ to a polynomial of degree $\le N_C+d-1$,
which proves the corollary.
\end{proof}

In the next section we shall study in more detail how $N_C$ depends
on the curve $C$, i.e.\ on the polynomial $q(x)$). In particular, we
will see that $N_C$ can become arbitrarily large.

\begin{rem}\label{sex}
Assume $f\in\R[x,y]$ is a \emph{linear} polynomial that is
nonnegative on $C(\R)$, where $C$: $y^2+q(x)=0$ is a curve as in
Theorem \ref{thmsum}. In this case we can make the argument leading
to the proof of the theorem entirely explicit. We assume that $f$ has
a real zero $p=(\xi,\eta)$ in $C(\R)$. So $f=0$ is the tangent line
to the plane curve $C$ at the point $p$.

Let us first assume that $\eta\ne0$ (the tangent is not vertical),
and that $f=0$ is not a double tangent. Then $f$ has a pair of
complex conjugate nonreal zeros on $C$, and we can apply the
construction from Lemma \ref{keystep} with $g=x-\xi$. The rational
function
$$\varphi(x,y)\>=\>\frac{(x-\xi)^2}{f(x,y)}$$
has no poles on $C(\R)$; let $\gamma>0$ be its maximum value. Then
$h:=f-\frac1\gamma(x-\xi)^2$ is psd on $C(\R)$ and has only real
zeros on $C$. If $q\in C(\R)$ is the point where $\varphi$ attains
its maximum $\gamma$, then the conic $h(x,y)=0$ is tangent to $C$ in
the points $p$ and $q$.
The psd function $h$ lies in the square class of $(x-\alpha)
(\beta-x)$ in $\R(C)^*/\R(C)^{*2}$. More explicitly, we have
\begin{equation}\label{explicsqcl}
h\>=\>\text{const}\cdot\frac{F^2}{(x-\alpha)(\beta-x)}
\end{equation}
with a positive constant and with
\begin{equation}\label{explicsqel2}
F\>=\>(\xi^2y-\eta x^2)+(\alpha+\beta)(\eta x-\xi y)+\alpha\beta
(y-\eta).
\end{equation}
Indeed, the above $F$ is nonzero since $\eta\ne0$ and has $\delta(F)
\le2$, and $F$ vanishes in $(\alpha,0)$, $(\beta,0)$ and $p=(\xi,
\eta)$. If we call $\tilde q$ the fourth zero of $F$, then the
rational function on the right of \eqref{explicsqcl} has zero divisor
$2(p+\tilde q)$, while $h$ has zero divisor $2(p+q)$. This implies
$q=\tilde q$ unless $q$ and $\tilde q$ are $(\alpha,0)$ and $(\beta,
0)$, which is excluded by the assumption $\eta\ne0$. Note that
$\tilde q=q$ is the point where $\varphi$ attains its maximum.

If $f=0$ is a double tangent then $\div(f)=2(p+q-\infty-\ol\infty)$
with a real point $q$ on $C$ (possibly $q=p$), and the argument of
the first case remains formally true (with $\gamma=\infty$, i.e.\
with $h=f$). So in this case
$$f\>=\>\text{const}\cdot\frac{F^2}{(x-\alpha)(\beta-x)}$$
with a positive constant and with $F$ as in \eqref{explicsqel2}.

In summary, once we have an explicit representation
$$(x-\alpha)(\beta-x)\>=\>\sum_\nu g_\nu^2$$
as a sum of squares in $\R[C]$, we immediately get an \emph{explicit}
sum of squares representation for every psd tangent line $f$ to $C$,
namely
$$f\>=\>\frac1\gamma(x-\xi)^2+{\rm const}\cdot\sum_\nu
\Bigl(\frac{Fg_\nu}{(x-\alpha)(\beta-x)}\Bigr)^2$$
with $F$ as in \eqref{explicsqel2}. (This is correct when $\eta\ne0$
and $f=0$ is not a double tangent; when $f=0$ is a double tangent it
is true with $\gamma=\infty$; when $\eta=0$ it is true with $F=
(x-\alpha)(\beta-x)$.) All fractions on the right lie in $\R[C]$.
\end{rem}

\begin{rems}
\hfil
\smallskip

1.\
It is not known whether Theorem \ref{sosstable} extends to curves
of genus greater than one. For simplicity, let us restrict the
discussion to irreducible affine and nonsingular curves $C$ over
$\R$ with $C(\R)\ne\emptyset$. When all points of $C$ at infinity
are real, then the sums of squares (sos) cone in $\R[C]$ is known
to be stable \cite{ps}. However, as soon as the genus $g_C\ge1$, this
assumption implies that the sos cone in $\R[C]$ is (much) smaller
than the psd cone \cite{sch:tams}. On the other side, when $C$ has
nonreal points at infinity (for example, when $C(\R)$ is compact),
then the psd and the sos cone in $\R[C]$ coincide \cite{sch:mz}.
However, there is not a single such curve of genus $\ge2$ for which
it is known whether or not the sos cone is stable.
\smallskip

2.\
It is natural to weaken the question, and to ask only for partial
stability, as in \ref{condsex}(2). For example, when $C$ is a plane
nonsingular curve of genus greater than one with $C(\R)$ compact, can
every linear polynomial nonnegative on $C(\R)$ be written as a sum of
squares in $\R[C]$, with the degrees of the summands bounded
uniformly? Of course, this would be much weaker a property than full
stability, and perhaps the answer is not so hard.
\end{rems}


\section{Application: Lifted LMI representations}

Here we sketch how the main results of the previous section, combined
possibly with further explicit results on degree bounds from the next,
lead to very explicit lifted LMI representations of the convex hull
of the curves considered.

First we record:

\begin{cor}
Let $C\subset\A^n$ be an irreducible real curve of genus one for
which $C(\R)$ is compact. Then the convex hull of $C(\R)$ in $\R^n$
has a lifted LMI representation.
\end{cor}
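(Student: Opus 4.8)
The plan is to reduce the statement about an arbitrary genus-one curve $C\subset\A^n$ with compact real locus to the main stability and saturation results already established for curves of the special plane form $y^2+q(x)=0$, and then feed that into Lasserre's relaxation machinery of Section~1. The key observation is that the corollary asks only for the \emph{existence} of a lifted LMI representation, so I do not need the fully explicit degree bounds of Theorem~\ref{thmsum}; the qualitative statement of Theorem~\ref{sosstable} suffices.

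First I would dispose of the case $C(\R)=\emptyset$, where the convex hull is empty and the statement is trivially true (one can take any infeasible LMI). So assume $C(\R)\ne\emptyset$. The curve $C\subset\A^n$ is irreducible of genus one with $C(\R)$ compact. Compactness of $C(\R)$ forces $C$ to have at least one pair of conjugate nonreal points at infinity: if every point of $\ol C$ at infinity were real, then $C(\R)$ would be noncompact (the real branches would run off to infinity), contradicting the hypothesis. Hence $C$ satisfies the hypotheses of Theorem~\ref{sosstable}, and we conclude that the preordering of sums of squares in $\R[C]$ is both stable and saturated. Here I am using the convention from the excerpt that the genus of $C$ is the geometric genus of its nonsingular projective model, so ``genus one'' already means $\ol C$ is elliptic.

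Next I would set up the Lasserre data. Writing $A=\R[C]=\R[\x]/\scrI_C$ for the coordinate ring relative to the given embedding $C\into\A^n$, take $L=A_1$ (the image of the affine-linear polynomials) in the notation of \ref{condsex} and \ref{lasrelax}. By Theorem~\ref{sosstable}, conditions (1) and (2) of \ref{condsex} hold for this $L$: saturatedness gives condition (1) (every psd element of $L$ is a sum of squares), and stability gives condition (2) (a single finite-dimensional $W\supseteq L$ captures all sos representations of elements of $L\cap\Sigma A^2$ with bounded degrees). With such a $W$ fixed, the relaxation construction of \ref{lasrelax} produces the projected spectrahedron $K_W=\rho(M_W^*\cap U_1^\du)\subset\R^n$, which by the cited criterion (\cite{la} Thm.~2, \cite{nps} Prop.~3.1) satisfies $\ol{K_W}=\ol{\conv V_\R(\scrI_C)}=\ol{\conv C(\R)}$. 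Two small points need checking to promote this from a closure statement to an honest LMI representation of $\conv C(\R)$: that $\scrI_C$ is real radical (so the closure formula of \ref{lasrelax} applies), which holds because $C$ is nonsingular and $C(\R)$ is Zariski dense in $C$, and that $\conv C(\R)$ is already closed, which follows since $C(\R)$ is compact (as noted at the end of \ref{lasrelax}). Together these give $\conv C(\R)=K_W$, a linear projection of a spectrahedron, i.e.\ a lifted LMI representation.

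\textbf{The main obstacle} is really the passage from the \emph{nonsingular plane model} $y^2+q(x)=0$ of \ref{suff}, for which Theorem~\ref{thmsum} is proved, to the arbitrary embedding $C\subset\A^n$ in the statement. The stability and saturatedness proved in Theorem~\ref{sosstable} are intrinsic properties of the abstract ring $\R[C]$ and its sums-of-squares cone, and the reduction Lemma~\ref{redlem} (together with Remark~\ref{remthmsum0}) already arranges that these properties are established for the abstract curve and transfer across birational changes of model and to Zariski open subcurves. The embedding $C\into\A^n$ is just one more choice of generators of $\R[C]$; stability and saturatedness do not depend on it. Thus the only genuine work beyond quoting Theorem~\ref{sosstable} is the verification that $C$ indeed satisfies its hypotheses --- namely the observation above that compactness of $C(\R)$ yields a nonreal point at infinity --- after which the relaxation formalism of \ref{lasrelax} and \ref{verorem} applies verbatim.
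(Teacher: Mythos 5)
Your argument is correct for nonsingular $C$, but it silently adds a hypothesis that the corollary does not contain: you apply Theorem \ref{sosstable} to $\R[C]$ itself and justify real-radicality of $\scrI_C$ ``because $C$ is nonsingular''. The corollary only assumes that $C\subset\A^n$ is an irreducible curve of genus one with $C(\R)$ compact, where --- by the convention fixed at the start of Section~2 --- the genus is that of the \emph{nonsingular projective model}; so $C$ is allowed to be singular, and the abstract explicitly advertises this case. For a singular $C$, Theorem \ref{sosstable} does not apply to $\R[C]$ (it is stated for nonsingular curves, and neither saturation nor stability of $\Sigma\R[C]^2$ need hold in the presence of real singularities), and $\scrI_C$ need not be real radical, so your closure argument breaks down at exactly the two points you flag as ``small points to check''.

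The paper's proof avoids this by passing to the normalization $\tilde C\to C$: Theorem \ref{sosstable} applies to $\R[\tilde C]$ (your verification that compactness forces a nonreal point at infinity is then carried out on $\tilde C$, where it is valid because $\ol{\tilde C}(\R)$ is a smooth $1$-manifold without isolated points), and the relaxation \ref{lasrelax} is performed not with $L=\R[C]_1$ but with the finite-dimensional subspace $L\subset\R[\tilde C]$ spanned by the images of $1,x_1,\dots,x_n$ under $\R[\x]\onto\R[C]\into\R[\tilde C]$, exactly as described in \ref{verorem}. That device --- running Lasserre's construction inside the coordinate ring of the normalization on a subspace other than $\R[\tilde C]_1$ --- is the one genuinely new ingredient of the proof beyond Theorem \ref{sosstable}, and it is the step your write-up is missing. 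In the special case where $C$ happens to be nonsingular, your proposal coincides with the paper's argument and is fine.
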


\begin{proof}
Let $\tilde C\to C$ be the normalization of $C$. Since Theorem
\ref{sosstable} applies to $\tilde C$, the Lasserre relaxation
construction \ref{lasrelax} becomes exact on every finite-dimensional
linear subspace $L$ of $\R[\tilde C]$. Let $\R[\x]_1=\{f\in\R[\x]
\colon\deg(f)\le1\}$, and perform the relaxation construction to the
image $L$ of $\R[\x]_1$ under $\R[\x]\onto\R[C]\into\R[\tilde C]$.
\end{proof}

We wish to demonstrate the explicitness of the construction by two
examples. For this we restrict to discussing plane affine curves with
equation $y^2+q(x)=0$ as in \ref{thmsum}.

\begin{rem}\label{explsdp}
Let $C$ be the curve $y^2+q(x)=0$, and let $L=\R[C]_1$ be the
subspace of $\R[C]$ spanned by $1$, $x$ and $y$. Let $N:=N_C$ be the
stability constant of $C$ (\ref{dfnnc}). By Corollary
\ref{corthmsum}, Lasserre's relaxation construction \ref{lasrelax}
works using the subspaces $W=\{f\colon\theta(f)\le N\}$ and $U=
\{f\colon\theta(f)\le2N\}$ of $\R[C]$. Since $\dim(W)=2N$ and $\dim
(U)=4N$, this presents the convex hull of $C(\R)$ in $\R^2$ in the
form
$$\conv C(\R)\>=\>\Biggl\{(x,y)\in\R^2\colon\ex z_1,\dots,z_{2N-3}
\text{ with }xA+yB+C_0+\sum_{j=1}^{4N-3}z_jC_j\succeq0\Biggr\}$$
where $A$, $B$, $C_j$ ($j=0,\dots,4N-3$) are real symmetric matrices
of size $2N\times2N$ that are easy to make explicit. (Here $S\succeq
0$ means that the symmetric matrix $S$ is positive semidefinite.)
\end{rem}

\begin{example}
For an illustration, consider the simplest case, which is curves $C$
with $N_C=2$. Up to a linear coordinate change, these are precisely
the curves with equation $y^2+(x^2-1)(x^2+b)=0$ where $b\ge-1$, $b\ne
0$ (see \ref{N23} below). If this equation is written $y^2+x^4+Ax^2+B
=0$, then $\conv C(\R)$ is the set of $(x,y)\in\R^2$ for which there
are $u_1,u_2,u_3,v_1,v_2\in\R$ such that
$$\left(\begin{array}{cccc}1&x&u_2&y\\x&u_2&u_3&v_1\\u_2&u_3&u_4&v_2
\\y&v_1&v_2&-B-Au_2-u_4\end{array}\right)\ \succeq\ 0.$$
This matrix is obtained using the basis $1,x,x^2,y$ of $W$ and the
basis $x^j$, $x^ky$ ($0\le j\le4$, $0\le k\le2$) of $U$, resp.\ its
dual basis of $U^\du$.
\end{example}

\begin{example}
As pointed out in \ref{verorem}, we can expect interesting results as
well from using construction \ref{lasrelax} for subspaces $L$
different from $\R[C]_1$. For example, we get concrete descriptions
of the convex hulls of embeddings of $C$ into higher-dimensional
spaces, or of singular quotients of the curve, or of combinations of
both. To present one more illustration, consider the curve $y^2+x^4
=1$, and perform construction \ref{lasrelax} with the subspace $L$ of
$\R[C]$ spanned by $1$, $x$ and $xy$. This gives the ``figur eight''
curve
$$C':\quad w^2=x^2(1-x^4)$$
and its convex hull in the $(x,w)$-plane. Since $N_C=2$ (see
\ref{N23} below), every psd element $f$ of $L$ satisfies $\theta(f)
\le3$ by \ref{corthmsum}, so the construction works with $W=\{f\colon
\delta(f)\le3\}$ and $U=\{f\colon\delta(f)\le6\}$. This yields a
lifted LMI representation of $\conv C'(\R)$ by symmetric $6\times6$
matrices with $9$~free variables, namely as the set of $(x,w)\in\R^2$
for which there exist real numbers $u_j$ ($2\le j\le6$) and $v_j$
($j\in\{0,2,3,4\}$) making the matrix
$$\left(\begin{array}{cccccc}
1 & x & u_2 & u_3 & v_0 & w \\
x & u_2 & u_3 & u_4 & w & v_2 \\
u_2 & u_3 & u_4 & u_5 & v_2 & v_3 \\
u_3 & u_4 & u_5 & u_6 & v_3 & v_4 \\
v_0 & w & v_2 & v_3 & 1-u_4 & x-u_5 \\
w & v_2 & v_3 & v_4 & x-u_5 & u_2-u_6
\end{array}\right)$$
nonnegative.
\end{example}

\begin{rem}
As far as we are aware, this is the first example in the literature
where explicit semidefinite representations are given for convex
hulls of non-rational real algebraic varieties. For rational curves,
such representations were given by Parrilo (\cite{paba}, unpublished)
and by Henrion \cite{he}, who also treats the quadratic Veronese
surface. The arguments in these cases are elementary.
\end{rem}


\section{Degree bounds: A detailed study}

Since explicit bounds for the stability constant (see \ref{dfnnc})
are necessary for produce concrete lifted LMI presentations, see
Remark \ref{explsdp}, we think it worthwile to discuss this constant
and its dependence on the individual curve in greater detail.

\begin{lab}
We keep the assumptions of \ref{suff}. So $q\in\R[x]$ is a monic
quartic polynomial which is indefinite and separable, and $C$ is the
affine real curve with equation $y^2+q(x)=0$. Let $\alpha<\beta$ be
the smallest resp.\ the largest real root of $q$, write $f=(x-\alpha)
(x-\beta)$, and let $h\in\R[x]$ be the monic quadratic polynomial
with $q=fh$. We have seen that the stability constant $N_C=\theta
(-f)$ governs all degree bounds for sums of squares decompositions
in $\R[C]$ (\ref{remthmsum}).
\end{lab}

\begin{lem}\label{umschreib}
Let $d$ be the smallest number for which there is an identity
$$th-sf\>=\>1$$
with sums of squares $s$, $t$ in $\R[x]$ and with $\deg(s)=\deg(t)\le
d$. Then $N_C=\frac d2+2$.
\end{lem}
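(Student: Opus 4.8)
The plan is to translate the statement about sums of squares on $C$ into the one–variable B\'ezout identity, using the decomposition $\R[C]=\R[x]\oplus\R[x]\,y$ and the relation $y^2=-q=-fh$ in $\R[C]$, which is exactly what links the two sides. The basic dictionary is this: writing $a\in\R[C]$ as $a=p(x)+r(x)\,y$ with $p,r\in\R[x]$, the description of $\{a\colon\delta(a)\le n\}$ by the basis $x^i$ ($i\le n$), $x^jy$ ($j\le n-2$) recalled above shows that $\delta(a)\le n$ if and only if $\deg p\le n$ and $\deg r\le n-2$. I also note at the outset that in any identity $th-sf=1$ with $h,f$ monic of degree two, comparing top-degree terms forces $\deg s=\deg t$, and this common value is even; so the minimal $d$ is well defined and even, consistent with $N_C\in\N$. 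Since $N_C=\theta(-f)$ with $-f=(x-\alpha)(\beta-x)$ (see \ref{remthmsum}, \ref{dfnnc}), it suffices to prove the two inequalities $N_C\le\frac d2+2$ and $\frac d2+2\le N_C$.

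For $N_C\le\frac d2+2$, I would start from a minimal identity $th-sf=1$ and write $s=\sum_i\sigma_i^2$, $t=\sum_j\tau_j^2$ with $\deg\sigma_i,\deg\tau_j\le\frac d2$. Multiplying by $-f$ and using $-fh=y^2$ gives, in $\R[C]$,
$$-f\>=\>t\,y^2+s\,f^2\>=\>\sum_j(\tau_j\,y)^2+\sum_i(\sigma_i\,f)^2.$$
Here $\delta(\tau_j y)=\deg\tau_j+2\le\frac d2+2$ and $\delta(\sigma_i f)=\deg\sigma_i+2\le\frac d2+2$, so this exhibits $-f$ as a sum of squares in $\R[C]$ with all summands of $\delta$-degree $\le\frac d2+2$, whence $\theta(-f)\le\frac d2+2$.

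For the reverse inequality I would run the same computation backwards, and this is the one step that is not purely formal. Take a representation $-f=\sum_i a_i^2$ in $\R[C]$ realizing $\theta(-f)=N_C$, so $\delta(a_i)\le N_C$, and write $a_i=p_i(x)+r_i(x)\,y$; then $\deg p_i\le N_C$ and $\deg r_i\le N_C-2$. Comparing the $\R[x]$– and $\R[x]y$–components of $\sum_i a_i^2=-f$ and putting $P=\sum_i p_i^2$, $R=\sum_i r_i^2$, the $x$–part reads $P-qR=-f$, i.e.\ $P=f(hR-1)$. The key observation is that $P(\alpha)=P(\beta)=0$ because $f(\alpha)=f(\beta)=0$; since $P$ is a sum of squares in $\R[x]$, each $p_i$ then vanishes at $\alpha$ and $\beta$, so $f\mid p_i$. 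Writing $p_i=f\,\tilde p_i$ gives $P=f^2\tilde P$ with $\tilde P=\sum_i\tilde p_i^2$, and cancelling one factor $f$ from $f^2\tilde P=f(hR-1)$ produces the exact identity
$$h\,R-f\,\tilde P\>=\>1.$$
Thus $t:=R$ and $s:=\tilde P$ are sums of squares in $\R[x]$ with $\deg t\le2(N_C-2)$ (as $\deg r_i\le N_C-2$) and $\deg s\le2(N_C-2)$ (as $\deg\tilde p_i=\deg p_i-2\le N_C-2$), so $d\le2N_C-4$, i.e.\ $\frac d2+2\le N_C$. Combined with the previous paragraph this yields $N_C=\frac d2+2$.

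The heart of the matter is the divisibility $f\mid p_i$ and the single cancellation of $f$ it permits: this is what upgrades the mere sign relation ``$P=f\cdot(\,\cdot\,)$ with $P$ a sum of squares'' into an \emph{exact} factorization, and it is precisely what makes the answer come out as $\frac d2+2$ rather than $\frac d2+1$. One small point still to check is that $R\neq0$: otherwise $-f=P$ would be a sum of squares in $\R[x]$, which is impossible since $-f=(x-\alpha)(\beta-x)$ is negative for $x>\beta$; this ensures the produced identity $hR-sf=1$ does not degenerate.
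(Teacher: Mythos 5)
Your proof is correct and follows essentially the same route as the paper: the dictionary $a_i=p_i+r_iy$, $y^2=-fh$ turning sums of squares in $\R[C]$ into identities $-f=s'-tq$, together with the key observation that $f$ divides each $p_i$ (hence $f^2\mid s'$) because $f$ has only real zeros, which allows cancelling one factor of $f$ to reach $th-sf=1$. You merely spell out both directions and the degenerate cases ($R\neq0$, $\deg s=\deg t$) that the paper leaves implicit.
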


\begin{proof}
Since $-f=\sum_i(a_i+b_iy)^2$ with $a_i$, $b_i\in\R[x]$ implies $-f=
\sum_ia_i^2-q\sum_ib_i^2$, we only need to consider identities
$$-f\>=\>s'-tq$$
with $a_i$, $b_i\in\R[x]$ and $s'=\sum_ia_i^2$, $t=\sum_ib_i^2$.
Clearly,
\begin{equation}\label{thetabsch}
\max_i\bigl\{\theta(a_i),\>\theta(b_iy)\bigr\}\>=\>\frac12\deg(s')\>=
\>2+\frac12\deg(t).
\end{equation}
Since $f$ has only real zeros, $f$ necessarily divides every $a_i$,
and so $f^2$ divides $s'$. Dividing by $f$ and putting $s=s'/f^2$ (a
sum of squares in $\R[x]$) we get $-1=sf-th$. The lemma follows.
\end{proof}

\begin{lab}\label{normalizeq}
By a linear change of variables we can normalize the equation of $C$
so that it becomes
\begin{equation}\label{cab}
y^2+(x^2-1)h(x)\>=\>0,\quad h(x)=x^2+ax+b,
\end{equation}
where $h$ is separable and $h(x)>0$ for $|x|\ge1$.
So the smallest (resp.\ the largest) real root of $q(x)=(x^2-1)h(x)$
are $-1$ (resp.~$+1$). For our study of how the stability constant
$N_C$ depends on the curve $C$, it will be convenient to assume that
$C$ has this normalized form. The conditions on $h$ mean that $(a,b)$
lies in the set
$$P\>:=\>\bigl\{(a,b)\in\R^2\colon a^2-4b<0\ \lor\ (a^2-4b>0\ \land\
|a|<\min\{2,\,|b|+1\})\bigr\}.$$
Let us denote by $C_{a,b}$ the affine curve with equation
\eqref{cab}, and let us abbreviate its stability constant by
$$N(a,b)\>:=\>N_{C_{a,b}},$$
for $(a,b)\in P$.
\end{lab}

\begin{lab}\label{N23}
By Lemma \ref{umschreib} we have $N(a,b)=2+\frac12\deg(t)$, where
$s$, $t\in\R[x]$ are psd polynomials with
$$1\>=\>t(x)\,(x^2+ax+b)-s(x)\,(x^2-1)$$
and $\deg(s)=\deg(t)$ is as small as possible. It is therefore clear
that always $N(a,b)\ge2$ holds, and that $N(a,b)=2$ if and only if
$a=0$. Without proof we remark that $N(a,b)\le3$ if and only if
$$\frac{a^4}{16}+a^2\>\le\>(b+1)^2.$$
The following picture shows the parameter set $P$, the yellow part
corresponding to $N\le3$ and the red part to $N\ge4$:
\begin{center}
\includegraphics[height=60mm]{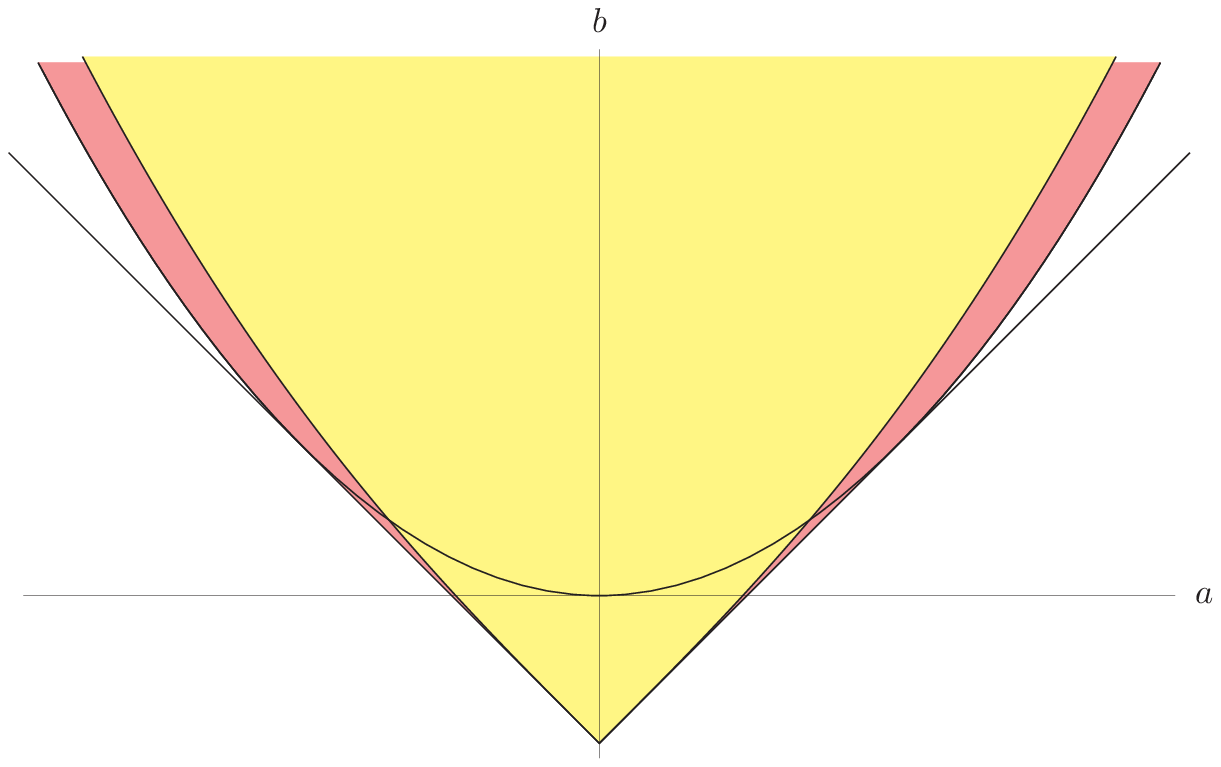}
\end{center}
\end{lab}

For the next lemma let $P'$ denote the boundary of the closure of
$P$, so
$$P'\>=\>\bigl\{(a,b)\in\R^2\colon a^2=4b\ge4\ \lor\ |a|=b+1\le2
\bigr\}.$$

\begin{lem}
Let $(a_\nu,b_\nu)_{\nu\ge1}$ be a sequence in $P$ that converges to
$(a,b)\in\R^2$ for $\nu\to\infty$. If the sequence $N(a_\nu,b_\nu)$
is bounded, and if $(a,b)\ne(0,-1)$, then $(a,b)\notin P'$. If in
addition $(a,b)\in P$ then $N(a,b)\le\sup_\nu N(a_\nu,b_\nu)$.
\end{lem}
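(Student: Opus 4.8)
The plan is to work through the characterization of $N(a,b)$ provided by Lemma~\ref{umschreib} and \ref{N23}, translating the boundedness hypothesis on $N(a_\nu,b_\nu)$ into a uniform degree bound on the B\'ezout-type identities $1=t_\nu\,(x^2+a_\nu x+b_\nu)-s_\nu\,(x^2-1)$. Concretely, if $N(a_\nu,b_\nu)\le M$ for all $\nu$, then $\deg(s_\nu)=\deg(t_\nu)\le 2(M-2)=:d$ for every $\nu$, where $s_\nu,t_\nu\in\R[x]$ are sums of squares. I would like to extract a convergent subsequence of the coefficient vectors of $(s_\nu,t_\nu)$, pass to the limit, and obtain an identity at the limit point $(a,b)$; if such a limiting identity existed with $a^2=4b$ or $|a|=b+1$ (the two branches defining $P'$), it would contradict the fact that the polynomials $x^2-1$ and $x^2+ax+b$ acquire a \emph{common real root} precisely on $P'$, which is incompatible with a B\'ezout identity producing the constant~$1$.

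First I would establish the compactness needed for the subsequence argument. The sums of squares $s_\nu,t_\nu$ have bounded degree, but a priori unbounded coefficients, so I cannot simply take a limit of coefficient vectors directly. To control them, I would evaluate the identity and its consequences on the compact real locus: since $h_\nu(x)=x^2+a_\nu x+b_\nu>0$ for $|x|\ge1$ and $x^2-1\le0$ on $[-1,1]$, the two summands $t_\nu h_\nu$ and $-s_\nu f$ (with $f=x^2-1$) are each nonnegative on suitable intervals, and their sum is the constant $1$. This should let me bound $t_\nu$ on $[-1,1]$ (where $f\le 0$ so $-s_\nu f\ge0$ forces $t_\nu h_\nu\le 1$) and symmetrically bound $s_\nu$ on the region where $h_\nu\ge0$; combined with the fixed degree bound $d$, interpolation at enough points yields a uniform bound on all coefficients. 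Having secured a uniformly bounded sequence of coefficient vectors in a fixed finite-dimensional space, I pass to a convergent subsequence and obtain limits $s,t\in\R[x]$, still sums of squares by closedness of the sos cone in each fixed degree, satisfying $1=t(x)\,(x^2+ax+b)-s(x)\,(x^2-1)$ at the limit.

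The main obstacle — and the crux of the lemma — is ruling out $(a,b)\in P'$ from this limiting identity, together with handling the excluded point $(0,-1)$. If $(a,b)\in P'$ lies on the branch $a^2=4b\ge4$, then $h(x)=x^2+ax+b=(x+\tfrac a2)^2$ has a double real root $x_0=-a/2$ with $|x_0|\ge1$; if it lies on $|a|=b+1\le2$, then $h$ shares a real root with $x^2-1$ at $x=\pm1$. In either case there is a common real zero $x_0$ of $h(x)$ and of a factor governing $f$, or more precisely a real point at which both $t(x)(x^2+ax+b)$ and $s(x)(x^2-1)$ vanish, forcing the right-hand side to vanish and contradicting its equality with $1$. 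I would verify this case by case, being careful that the degenerate point $(0,-1)$ (where $h(x)=x^2-1=f$ and the curve itself degenerates) is genuinely exceptional and correctly excluded by hypothesis, since there the identity structure collapses and $N$ need not blow up.

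Finally, for the last sentence, suppose in addition $(a,b)\in P$. Then the limiting identity $1=t\,(x^2+ax+b)-s\,(x^2-1)$ is a valid B\'ezout identity at $(a,b)$ with $\deg(s)=\deg(t)\le d=2(\sup_\nu N(a_\nu,b_\nu)-2)$, and $s,t$ are sums of squares. By Lemma~\ref{umschreib}, $N(a,b)=2+\tfrac12\deg(t)$ for the \emph{minimal} such degree, so $N(a,b)\le 2+\tfrac12 d=\sup_\nu N(a_\nu,b_\nu)$, giving the stated upper semicontinuity at interior points. I expect the coefficient-boundedness step to be the technically delicate part, since it is where analysis (compactness of $C(\R)$ and positivity of the factors) must be leveraged to tame sos representations of a priori uncontrolled size; the contradiction on $P'$ is conceptually the heart but should be a short, clean computation once the limit is in hand.
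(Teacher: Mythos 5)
Your overall architecture --- translate the boundedness of $N(a_\nu,b_\nu)$ via \ref{N23} into identities $1=t_\nu h_\nu-s_\nu f$ of uniformly bounded degree, extract a convergent subsequence, and read off a contradiction at points of $P'$ and the bound $N(a,b)\le\sup_\nu N(a_\nu,b_\nu)$ at points of $P$ --- is the same as the paper's, but there is a genuine gap at the compactness step, and it occurs in precisely one of the cases the lemma must exclude. From $f\le0$ and $s_\nu\ge0$ on $[-1,1]$ you correctly get $t_\nu h_\nu\le1$ there, but this bounds $t_\nu$ from above only where $h_\nu$ is uniformly positive; for $(a_\nu,b_\nu)\in P$ the polynomial $h_\nu$ may vanish or be negative inside $(-1,1)$, and $h_\nu(\pm1)$ may tend to $0$. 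Since $f(\pm1)=0$, the identity forces $t_\nu(\pm1)=1/h_\nu(\pm1)$, so when $(a,b)$ lies on the branch $|a|=b+1\le2$ of $P'$ one of $t_\nu(1)$, $t_\nu(-1)$ tends to infinity and \emph{no} uniform coefficient bound exists: the ``bounded coefficients, pass to the limit, find a contradiction'' plan cannot even start on that branch. The paper handles this by a case split: if the coefficients stay bounded it argues as you do; if not, it rescales the identity by the largest coefficient $c_\nu$ and passes to the limit of the rescaled relation, obtaining $(x^2+ax+b)t=(x^2-1)s$ with $s,t$ nonzero sums of squares, whence $(x^2-1)(x^2+ax+b)$ is psd, which forces $(a,b)=(0,-1)$ --- this is exactly where the excluded point enters. (Your route can be repaired: since $(a,b)\ne(0,-1)$, at least one of $h(\pm1)$ is positive, so $h_\nu$ is uniformly positive on a fixed subinterval adjacent to that endpoint, $t_\nu$ is bounded there, and a polynomial of bounded degree bounded on an interval of positive length has bounded coefficients; then $s_\nu=(t_\nu h_\nu-1)/f$ is controlled on a slightly smaller subinterval where $f$ is bounded away from $0$. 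But this refinement is the whole content of the delicate step, and it is where the hypothesis $(a,b)\ne(0,-1)$ is actually consumed.)

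A secondary inaccuracy: on the branch $a^2=4b$, $|a|>2$ of $P'$ the polynomials $h=(x+\frac a2)^2$ and $f=x^2-1$ have \emph{no} common real root, so it is not true that both $th$ and $sf$ vanish at some point. The contradiction there is a sign argument: at $x_0=-\frac a2$ the limit identity gives $1=-(x_0^2-1)s(x_0)\le0$ because $x_0^2-1>0$ and $s$ is psd. Your final step (deducing $N(a,b)\le\sup_\nu N(a_\nu,b_\nu)$ for $(a,b)\in P$ from the limit identity via Lemma~\ref{umschreib}) is correct once the limit identity is secured.
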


(If $(a,b)\ne P\cup P'$ then $a^2-4b=0$ and $|a|<2$.)

\begin{proof}
Assume that the sequence $N(a_\nu,b_\nu)$ is bounded. By \ref{N23}
this means that there are $d\ge0$ and sums of squares $s_\nu(x)$,
$t_\nu(x)$ in $\R[x]$ with $\deg(s_\nu)=\deg(t_\nu)\le2d$ and
\begin{equation}\label{convergeq}
1\>=\>(x^2+a_\nu x+b_\nu)t_\nu(x)-(x^2-1)s_\nu(x)
\end{equation}
for every $\nu$. We first assume that the coefficients of the $t_\nu$
and $s_\nu$ are uniformly bounded for all $\nu$. After passing to a
suitable subsequence we can then assume that we have
(coefficient-wise) convergences $s_\nu\to s$ and $t_\nu\to t$, where
$s$, $t\in\R[x]$ are clearly sums of squares. Passing
\eqref{convergeq} to the limit $\nu\to\infty$ we see
\begin{equation}\label{limeq}
1\>=\>(x^2+ax+b)t(x)-(x^2-1)s(x).
\end{equation}
If $(a,b)\in P$, it follows that $N(a,b)\le d$. Assume $(a,b)\in P'$
and $(a,b)\ne(0,-1)$. If $|a|>2$ then $a^2=4b$, so $x^2+ax+b=(x+\frac
a2)^2$ has a double zero at $-\frac a2$, which contradicts
\eqref{limeq}.
If $|a|\le2$ then $|a|=b+1$, and so $x^2+ax+b$ has a zero at $\pm1$,
which equally contradicts \eqref{limeq}.

There remains the case where the coefficients or $s_\nu$ or $t_\nu$
are unbounded for $\nu\to\infty$. We scale \eqref{convergeq} for each
$\nu$ by the factor $\frac1{c_\nu}$ where $c_\nu>0$ is the maximum
absolut value of the coefficients of $s_\nu(x)$ and $t_\nu(x)$. After
passing to a subsequence we have convergence $c_\nu^{-1}s_\nu(x)\to
s(x)$ and $c_\nu^{-1}t_\nu(x)\to t(x)$, and again $s$, $t\in\R[x]$
are sums of squares. Both are nonzero since each has a coefficient
$\pm1$. Taking \eqref{convergeq} to the limit gives
$$(x^2+ax+b)\,t(x)\>=\>(x^2-1)\,s(x).$$
This implies that $(x^2-1)(x^2+ax+b)$ is a psd polynomial, which only
happens for $(a,b)=(0,-1)$.
\end{proof}

\begin{cor}\label{boundexplode}
\hfil
\begin{itemize}
\item[(a)]
For each $N\ge0$, the set $\{(a,b)\in P\colon N(a,b)\le N\}$ is
relatively closed in $P$.
\item[(b)]
When $(a,b)$ moves in $P$ towards a boundary point $\ne(0,-1)$ in
$P'$, then $N(a,b)$ tends to infinity.
\qed
\end{itemize}
\end{cor}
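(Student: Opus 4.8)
The plan is to derive Corollary~\ref{boundexplode} directly from the preceding Lemma, which I will call Lemma~(*) for this sketch. Part~(a) and part~(b) are essentially two readings of the same statement, so I would handle them in that order.

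For part~(a), I want to show that for fixed $N\ge0$ the set $S_N:=\{(a,b)\in P\colon N(a,b)\le N\}$ is relatively closed in $P$. By definition this means: whenever a sequence $(a_\nu,b_\nu)$ in $S_N$ converges to a point $(a,b)\in P$, the limit again lies in $S_N$. So I take such a sequence. By hypothesis $N(a_\nu,b_\nu)\le N$ for all $\nu$, hence the sequence $N(a_\nu,b_\nu)$ is bounded. The limit $(a,b)$ is assumed to lie in $P$; in particular $(a,b)\ne(0,-1)$ is automatic only if $(0,-1)\notin P$, so I would first check that $(0,-1)\notin P$ (indeed $a=0$, $b=-1$ gives $a^2-4b=4>0$ and $|a|=0<\min\{2,|b|+1\}=\min\{2,2\}=2$, so actually $(0,-1)\in P$; I must therefore treat it separately). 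Away from the point $(0,-1)$, Lemma~(*) applies verbatim: since the $N$-values are bounded and the limit lies in $P$, its final sentence gives $N(a,b)\le\sup_\nu N(a_\nu,b_\nu)\le N$, so $(a,b)\in S_N$, as desired. For the single point $(0,-1)$, I note that $N(0,-1)=2$ by \ref{N23} (since $a=0$), so $(0,-1)\in S_N$ for every $N\ge2$; for $N<2$ the set $S_N$ is empty because $N(a,b)\ge2$ always, and the empty set is trivially closed. This disposes of (a).

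For part~(b), the claim is that $N(a,b)\to\infty$ as $(a,b)$ moves within $P$ toward a boundary point $p\in P'$ with $p\ne(0,-1)$. I would argue by contraposition using (a): suppose $N(a,b)$ does \emph{not} tend to infinity along some such approach. Then there is a sequence $(a_\nu,b_\nu)$ in $P$ with $(a_\nu,b_\nu)\to p$ and with $N(a_\nu,b_\nu)$ bounded, say $\le N$. Thus the whole sequence lies in $S_N$, and its limit $p$ is a point of $\R^2$ with $p\ne(0,-1)$. Now Lemma~(*) applies directly: a sequence in $P$ with bounded $N$-values and limit $\ne(0,-1)$ cannot converge to a point of $P'$, so $p\notin P'$, contradicting $p\in P'$. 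Hence no bounded subsequence of $N$-values can exist along the approach, which is exactly the assertion that $N(a,b)\to\infty$.

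The only genuine subtlety—and the step I would be most careful about—is bookkeeping around the excluded point $(0,-1)$ and the precise logical form of "relatively closed'' versus the conclusion supplied by Lemma~(*). Lemma~(*) has two conclusions: first, that the limit cannot lie in $P'$ (used for part~(b)), and second, the inequality $N(a,b)\le\sup_\nu N(a_\nu,b_\nu)$ conditional on the limit already lying in $P$ (used for part~(a)). Matching each part of the corollary to the correct half of the lemma, and verifying that $(0,-1)$ genuinely lies in $P$ (so that it must be checked by hand rather than being covered by the lemma), is where an error could slip in. Everything else is a formal unwinding of definitions, with no new computation required beyond evaluating $N$ at the single exceptional point via \ref{N23}.
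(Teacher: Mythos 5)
Your proposal is correct and is exactly the intended derivation: the paper states the corollary with no separate proof because both parts are immediate from the preceding lemma, part (a) from its second conclusion and part (b), by contraposition, from its first, just as you do. The one point worth correcting is your treatment of $(0,-1)$: the displayed formula for $P$ in \ref{normalizeq} contains a typo ($|b|+1$ should be $b+1$, since the condition encodes $h(\pm1)>0$), and with the intended definition $(0,-1)$ lies in $P'$ rather than in $P$ (for $(a,b)=(0,-1)$ the polynomial $q=(x^2-1)^2$ is not separable, so $C_{0,-1}$ is not an admissible curve and $N(0,-1)$ is not defined); consequently your separate case is vacuous rather than needed, and your appeal to \ref{N23} to compute $N(0,-1)=2$ is not meaningful, though this does not affect the validity of the rest of the argument.
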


Note that (b) is not necessarily true when $(a,b)\to(0,-1)$, for
example since $N(0,b)=2$ for all $b>-1$.

\begin{rem}
Degeneration of $(a,b)\in P$ towards a boundary point $(a_0,b_0)\in
P'$, $(a_0,b_0)\ne(0,-1)$, corresponds to degenerating the curve
$C_{a,b}$ into a nodal curve (for $|a_0|\ne1$) or a cuspidal curve
(for $a_0=\pm1$), rational in either case.
\end{rem}

\begin{lab}\label{markov}
We do not know how to express $N(a,b)$ for arbitrary $(a,b)\in P$.
We conclude with proving an explicit lower bound for $N(a,b)$ in the
$|a|>2$ part. We keep the normalizations \ref{normalizeq} and write
$h=x^2+ax+b$ and $f=x^2-1$.

Assume that one of $h'(-1)>0$ or $h'(1)<0$ holds. Either condition
implies $h(x)>0$ for all $x\in\R$.
Let us assume $h'(1)<0$, and let $s$, $t\in\R[x]$ be psd polynomials
with $1=th-sf$ (c.f.\ \ref{umschreib}). We conclude
\begin{equation}\label{vgl}
t(x)\ge\frac1{h(x)}\text{ \ for }|x|\ge1,\quad0\le t(x)\le\frac
1{h(x)}\text{ \ for }|x|\le1.
\end{equation}
In particular $t(1)=\frac1{h(1)}$.
Since $h$ is quadratic, $h'(1)<0$ implies $h'(x)<0$ for all $x\le1$,
and so $\frac1h$ is strictly increasing for $x\le1$. Hence $0\le t(x)
\le t(1)=\frac1{h(1)}$ for $|x|\le1$. On the other hand, \eqref{vgl}
implies $t'(1)\ge(\frac1h)'(1)=-\frac{h'(1)}{h(1)^2}$.

According to Markov's inequality (\cite{mv}, \cite{be}), any
polynomial $p\in\R[x]$ of degree $\le n$ satisfies
$$||p'||_{[-1,1]}\>\le\>n^2\cdot||p||_{[-1,1]},$$
where $||p||_{[-1,1]}=\max\{|p(x)|\colon|x|\le1\}$. Applying this to
$p=t-\frac1{2h(1)}$ we conclude
\begin{equation}\label{markovest}
\deg(t)^2\>\ge\>2\cdot\frac{(1/h)'(1)}{(1/h)(1)}\>=\>-2\,\frac{h'(1)}
{h(1)}.
\end{equation}
Writing $h=x^2+ax+b$, the assumption $h'(1)<0$ means $a+2<0$, and
\eqref{markovest} becomes
$$\deg(t)^2\>\ge-\frac{2(a+2)}{1+a+b}.$$
If instead of $h'(1)<0$ we assume $h'(-1)>0$, we get a symmetric
estimate. Altogether we have shown:
\end{lab}

\begin{prop}\label{markovbd}
Consider the affine curve $y^2+(x^2-1)(x^2+ax+b)=0$ with $(a,b)\in
P$. If $|a|>2$ then
$$N(a,b)\>\ge\>2+\sqrt{\frac{|a|-2}{2(1+b-|a|)}}\>.\eqno\square$$
\end{prop}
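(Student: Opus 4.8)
The plan is to reduce Proposition \ref{markovbd} to the estimate already assembled in \ref{markov}. By the symmetry $x\mapsto-x$ (which swaps the roles of $h'(1)$ and $h'(-1)$ while preserving the form of the equation), it suffices to treat the case $h'(1)<0$, i.e.\ $a+2<0$, which is the subcase $a<-2$ of $|a|>2$. First I would invoke \eqref{markovest}, or rather its explicit rewriting $\deg(t)^2\ge-\frac{2(a+2)}{1+a+b}$, which was derived in \ref{markov} from Markov's inequality applied to $p=t-\frac1{2h(1)}$. The whole analytic content of the bound is contained in that derivation, so the proposition is essentially a matter of translating the degree bound on $t$ into a bound on the stability constant.

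The translation step is Lemma \ref{umschreib}: for the minimal admissible $\deg(t)$ we have $N_C=\frac{\deg(t)}2+2$, that is $N(a,b)=2+\frac12\deg(t)$. Combining this with $\deg(t)\ge\sqrt{-\tfrac{2(a+2)}{1+a+b}}$ gives
$$N(a,b)\>=\>2+\frac12\deg(t)\>\ge\>2+\frac12\sqrt{\frac{-2(a+2)}{1+a+b}}\>=\>2+\sqrt{\frac{-(a+2)}{2(1+a+b)}}.$$
Since $a<-2$ we have $-(a+2)=|a|-2$, and since $h'(1)<0$ forces $h(x)>0$ on all of $\R$ one checks $h(1)=1+a+b>0$, so that $1+a+b=1-|a|+b=1+b-|a|>0$ and $-(a+2)/(1+a+b)=(|a|-2)/(1+b-|a|)$. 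This yields exactly the asserted inequality. The case $a>2$ is handled identically under the hypothesis $h'(-1)>0$, producing the same bound with $|a|=a$, and the two cases together cover all $(a,b)\in P$ with $|a|>2$.

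I would be careful about one subtlety: the estimate \eqref{vgl} and hence \eqref{markovest} were derived under the standing assumption $h'(1)<0$ (or $h'(-1)>0$), which the passage \ref{markov} notes forces $h(x)>0$ for all real $x$; I should remark why this hypothesis is automatically available in the regime $|a|>2$. Indeed, for $(a,b)\in P$ with $|a|>2$ the defining conditions of $P$ require $a^2-4b>0$ and $|a|<|b|+1$, and a short check shows this interval of parameters is precisely the one in which $h$ has no real root in $[-1,1]$ and is positive there, with $h'$ of the appropriate sign at the nearer endpoint; this is what makes one of $h'(1)<0$, $h'(-1)>0$ hold. The only genuine obstacle is thus bookkeeping—verifying that the sign conditions on $h'(\pm1)$ are met throughout the $|a|>2$ slice of $P$ and that the square-root simplifications go through with the correct sign of $1+b-|a|$—rather than any new analytic input, since Markov's inequality and Lemma \ref{umschreib} already supply all the substance.
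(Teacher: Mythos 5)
Your proposal is correct and follows exactly the paper's route: the proposition carries a terminal $\square$ precisely because its proof is the computation already carried out in \ref{markov} --- the Markov-inequality estimate \eqref{markovest} translated into a bound on $N(a,b)$ via Lemma \ref{umschreib}, with the case $a>2$ handled by the symmetry $x\mapsto-x$ --- and this is what you reproduce. One small correction to your final paragraph: for $(a,b)\in P$ with $|a|>2$ the second disjunct in the definition of $P$ is impossible (it would force $|a|<2$), so in fact $a^2-4b<0$ and $h$ has no real roots at all; this slip is immaterial, however, since the required hypothesis follows directly from $h'(\pm1)=a\pm2$ (so $a<-2$ gives $h'(1)<0$ and $a>2$ gives $h'(-1)>0$), and $h(\pm1)>0$, hence $1+b-|a|>0$, is already part of the normalization \ref{normalizeq}.
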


\begin{example}
For $\gamma>0$ consider the curve $C_\gamma$ with equation $y^2+
(x^2-1)h_\gamma(x)=0$ where
$$h_\gamma(x)\>=\>x^2+\Bigl(2+\frac2\gamma\Bigr)x+\Bigl(1+\frac
2\gamma+\frac4{\gamma^2}\Bigr)\>=\>\Bigl(x+1+\frac1\gamma\Bigr)^2+
\frac3{\gamma^2}.$$
Via Markov's inequality we get the lower bound
$$N_{C_\gamma}\>\ge\>2+\frac{\sqrt\gamma}2$$
from Proposition \ref{markovbd}, which tends to infinity for $\gamma
\to\infty$.

However, this bound does not seem to come close to being sharp. We
did a small series of numerical experiments using Parrilo's
\texttt{sostools} package \cite{psosto}, resulting in the following
observations:
$$\begin{array}{c|c|r}
N & 4(N-2)^2 & \gamma_{\max}(N) \\
\hline
3 & 4 & 2.57 \\
4 & 16 & 6.92 \\
5 & 36 & 12.95 \\
6 & 64 & 20.70 \\
7 & 100 & 30.17 \\
8 & 144 & 41.35 \\
9 & 196 &\ \ 54.25
\end{array}$$
For given $N\in\N$ let $\gamma_{\max}(N)$ be the maximal $\gamma>0$
for which $N_{C_\gamma}\le N$. The Markov estimate gives $\gamma_
{\max}(N)\le4(N-2)^2$, which is the second column. The approximate
true value of $\gamma_{\max}(N)$ is shown in the last column.
\end{example}


\end{document}